\newcommand{\C}{\mathbb{C}}
\newcommand{\R}{\mathbb{R}}
\newcommand{\Z}{\mathbb{Z}}
\newcommand{\PPP}{\mathbb{P}}
\renewcommand{\to}{\longrightarrow}
\newtheorem{Theorem}{Theorem}[section]
\newtheorem{Definition}[Theorem]{Definition}
\newtheorem{Lemma}[Theorem]{Lemma}
\newtheorem{Proposition}[Theorem]{Proposition}
\newtheorem{Corollary}[Theorem]{Corollary}
\newtheorem{Remark}[Theorem]{Remark}
\newtheorem{Example}[Theorem]{Example}
\DeclareMathOperator{\Hom}{Hom}
\DeclareMathOperator{\Cone}{Cone}
\newcommand{\A}{\mathcal{A}}
\newcommand{\T}{\mathbb{T}}
\newcommand{\LL}{\mathcal{L}}
\DeclareMathOperator{\Exp}{Exp}
\DeclareMathOperator{\Adm}{Adm}
\DeclareMathOperator{\NonAdm}{NonAdm}
\DeclareMathOperator{\Tess}{\T^{ess}}
\DeclareMathOperator{\Admess}{Adm^{ess}}
\DeclareMathOperator{\NonAdmess}{NonAdm^{ess}}
\begin{document}%%%%%%%%%%%%%%%%%%%%%%%%%%%%%%%%%%%%%%%%%%%%%%%%%%%%%%%%%%%%%%%%%%%%%

\title{On the admissibility of certain local systems}

\begin{abstract}

A rank one local system on the complement of a hyperplane arrangement is 
said to be admissible if it satisfies certain non-positivity condition at 
every resonant edges. It is known that the cohomology of admissible local system can be 
computed combinatorially. In this paper, we study the structure of the 
set of all non-admissible local systems in the character torus. 
We prove that 
the set of non-admissible local systems forms a union of subtori. 
The relations with characteristic varieties are also discussed. 
\end{abstract}

\author{Shaheen Nazir, Michele Torielli, and Masahiko Yoshinaga} 

\address{\begin{flushleft}Shaheen Nazir, Department of Mathematics, LUMS School of Science and Engineering, U-Block, D.H.A, Lahore, Pakistan.\\
\emph{E-mail address:} \rm {\texttt{shaheen.nazeer@lums.edu.pk} }  \end{flushleft}}
\address{\begin{flushleft}Michele Torielli, Department of Mathematics, Hokkaido University, Kita 10, Nishi 8, Kita-Ku, Sapporo 060-0810, Japan.\\
\emph{E-mail address:} \rm {\texttt{torielli@math.sci.hokudai.ac.jp} }\end{flushleft}}
\address{\begin{flushleft}Masahiko Yoshinaga, Department of Mathematics, Hokkaido University, Kita 10, Nishi 8, Kita-Ku, Sapporo 060-0810, Japan.\\
\emph{E-mail address:} \rm {\texttt{yoshinaga@math.sci.hokudai.ac.jp} } \end{flushleft}}
\date{\today}
\maketitle

\tableofcontents

\section{Introduction}
Let $\A=\{H_1, \dots, H_n\}$ be a finite set of 
hyperplanes in $\PPP_\C^\ell$. 
%The complement 
%$M(\A)=\PPP_\C^\ell\setminus\bigcup_{H\in\A}H$ plays important 
%roles in topology and combinatorics. 
The hyperplane arrangement $\A$ 
determines a poset $L(\A)$ of subspaces obtained as intersections of 
hyperplanes in $\A$. The combinatorial structure of $L(\A)$ 
is deeply related to the topology of $M(\A)$. 
In \cite{orlik1980combinatorics} Orlik and Solomon proved that 
the cohomology ring $H^*(M(\A), \Z)$ can be described in terms 
of combinatorial structures of $L(\A)$. However the homotopy type of 
$M(\A)$ can not be determined by $L(\A)$. Indeed, 
in \cite{rybnikov2011fundamental} Rybnikov proved that 
the fundamental group $\pi_1(M(\A))$ can not be determined 
by $L(\A)$. The combinatorial decidability of other 
topological invariants is still widely open. One of 
such invariant is rank one local system cohomology groups over 
$M(\A)$, which is originally motivated by 
hypergeometric functions \cite{aom-kit, gel-hyp}. 
Rank one local systems are parametrized by points in 
the character torus $\T(\A)=\Hom(\pi_1(M(\A)),\C^*)$. 
For generic $t\in\T(\A)$, it is proved (\cite{koh}) that 
$H^i(M(\A), \mathcal{L}_t)=0$ for $i\neq \ell$, where 
$\mathcal{L}_t$ is a rank one local system corresponding 
to $t\in\T(\A)$. Furthermore, Esnault-Schechtman-Viehweg 
\cite{esnault1992cohomology} (and \cite{schechtman1994local}) proved that 
for a local system $\LL_t$ such that the residue of associated logarithmic 
connection at each hyperplane is not 
a positive integer, 
the cohomology group $H^i(M(\A), \LL_t)$ can be computed 
by using the cochain complex, so-called the Aomoto complex, 
defined on the graded module $H^*(M(\A), \C)$, see \S\ref{sec:structure}. Such a local 
system is now called an admissible local system (see Definition 
\ref{defadlocsys}). 
Therefore, for an admissible local system $\LL_t$, 
the cohomology $H^*(M(\A), \LL_t)$ is combinatorially computable. 

For some arrangements, it has been proved that all rank one local 
systems are admissible (\cite{nazir2009admissible}). However, in general, 
the set of non-admissible local systems is non-empty. 
A natural strategy to study combinatorial decidability of 
local system cohomology groups is: 
%\begin{itemize}
%\item[(a)] 
(a) Determine the set of all non-admissible local systems 
in $\T(\A)$. 
%\item[(b)] 
(b) Compute the local system cohomology groups for 
non-admissible local systems. 
%\end{itemize}
The purpose of this paper is related to the part (a) of the above strategy. 
We study the basic properties of the set of non-admissible local systems 
in the character torus $\T(\A)$. 

The paper is organized as follows. 
In \S\ref{sec:general}, we generalise the notion of admissible local systems. Consider the exponential map $\Exp\colon V=\C^n\to \mathbb{T}=(\C^*)^n,$ with kernel $\Lambda\cong\mathbb{Z}^n$. We introduce the notion 
``$\Phi$-admissibility'' on the 
algebraic torus $\T=(\C^*)^n$, for any finite set $\Phi\subset V^*$ of linear forms which preserve integral structure. We prove that the set of non-admissible 
points in $\T$ forms a union of subtori. We also give several conditions 
on $t\in\T$ to be admissible/non-admissible. 
In \S\ref{sec:structure}, we apply results from the previous section to 
the case of character torus of the complement complex line arrangements. 
In \S\ref{sec:characteristic}, we discuss the relation between non-admissible 
local systems and characteristic varieties. In particular, we prove that 
the local system corresponding to a point in the translated component 
(in the characteristic variety) is non-admissible. 
In \S\ref{sec:examples}, we describe several examples. 
\paragraph{\textbf{Acknowledgements}} The first author would like to  thank the Department of Mathematics and Faculty of Science of Hokkaido University
for kind hospitality during the initial stage of this work. The second author is supported by JSPS Fellowship for Foreign Researcher. The third author is supported by JSPS Grants-in-Aid for Scientific Research (C).

\section{General theory}
\label{sec:general}

Let $V\cong\C^n$ be a vector space and let $\mathbb{T}\cong(\C^*)^n$ be a complex torus. Consider the exponential mapping
$$
\Exp\colon V\to \mathbb{T},
$$
induced by the usual exponential function 
$\C\to\C^*$, $t\mapsto\Exp(t)=\exp(2\pi it)$, 
where $i=\sqrt {-1}$. Let $\Lambda:=\ker(\Exp)$. Note that $\Lambda$ is a lattice and hence $\Lambda\cong\mathbb{Z}^n$. Then we have that $V=\Lambda\otimes\C$ and define $V_\R=\Lambda\otimes\R\cong\R^n$.

Consider a set $\Phi:=\{\alpha_1,\dots,\alpha_r\}$ of 
linear maps $\alpha_i\colon V\to\C$ such that $\alpha_i(\Lambda)\subset\mathbb{Z}$ for all $i=1,\dots, r$. 
It is easily seen that there exists a character $\tilde{\alpha}_i\colon\mathbb{T}\to \C^*$ such that the following diagram is commutative
\begin{equation*}
\xymatrix{V \ar[d]^{\alpha_i}  \ar[r]^\Exp &\mathbb{T} \ar[d]^{\tilde{\alpha}_i} \\
 \C \ar[r]^{\Exp} &\C^*.} \label{defdiag} \end{equation*}
 \begin{Definition}\label{defphiadmiss} Consider $t\in\mathbb{T}$. It is called \emph{$\Phi$-admissible} if there exists $v\in V$ such that $\Exp(v)=t$ and $\alpha_i(v)\notin\mathbb{Z}_{>0}$ for all $\alpha_i\in\Phi$. 
 \end{Definition}
 \begin{Example} The unit $1\in\mathbb{T}$ is $\Phi$-admissible for all $\Phi$. In fact, $\Exp(0)=1$ and $\alpha_i(0)=0\notin\mathbb{Z}_{>0}$ for all $\alpha_i\in\Phi$.
 \end{Example}
% \begin{Remark} 
Note that, for any given $\Phi$, we can write $\mathbb{T}=\Adm(\Phi)\sqcup \NonAdm(\Phi)$, where $\Adm(\Phi)$ is the set of $\Phi$-admissible elements and $\NonAdm(\Phi)$ is the set of non $\Phi$-admissible elements.
% \end{Remark}
 \begin{Lemma}\label{lem:open} 
The set $\Adm(\Phi)$ is open in $\mathbb{T}$.
\end{Lemma}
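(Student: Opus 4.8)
To show $\Adm(\Phi)$ is open, I would argue that its complement $\NonAdm(\Phi)$ is closed, or equivalently that every $\Phi$-admissible point $t$ has an open neighborhood consisting of $\Phi$-admissible points. Fix $t\in\Adm(\Phi)$ and choose $v\in V$ with $\Exp(v)=t$ and $\alpha_i(v)\notin\Z_{>0}$ for all $i$. The key observation is that the condition ``$\alpha_i(v)\notin\Z_{>0}$'' is an open condition on $v\in V$: indeed, $\alpha_i^{-1}(\Z_{>0})$ is a countable disjoint union of affine hyperplanes in $V$, hence closed, so the set
\[
U:=\{v'\in V : \alpha_i(v')\notin\Z_{>0}\text{ for all }i=1,\dots,r\}
\]
is open in $V$.

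**Transporting openness through $\Exp$.**
The remaining point is that $\Exp$ is an open map (it is a holomorphic covering map, in particular a local biholomorphism), so $\Exp(U)$ is open in $\T$. I claim $\Exp(U)\subset\Adm(\Phi)$: any $s\in\Exp(U)$ has a preimage $v'\in U$, and by definition of $U$ this $v'$ witnesses the $\Phi$-admissibility of $s$. Since $v\in U$ we have $t=\Exp(v)\in\Exp(U)$, so $\Exp(U)$ is the desired open neighborhood of $t$. As $t$ was arbitrary, $\Adm(\Phi)$ is open.

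**Main obstacle.**
There is essentially no hard step here; the only thing to be careful about is that $\alpha_i$ takes values in $\C$, not $\R$, so $\alpha_i^{-1}(\Z_{>0})$ is a union of \emph{real} affine hyperplanes (codimension-one real subspaces) in $V\cong\C^n\cong\R^{2n}$ rather than complex hyperplanes — this does not affect the argument since such a countable union is still closed and its complement still open. One could alternatively phrase the whole proof purely locally: pick a small ball $B\ni v$ on which $\Exp$ restricts to a biholomorphism onto a neighborhood of $t$, shrink $B$ so that $\overline{\alpha_i(B)}$ avoids $\Z_{>0}$ for each $i$ (possible since $\alpha_i(v)\notin\Z_{>0}$ and $\Z_{>0}$ is discrete), and observe that every point of $\Exp(B)$ is then $\Phi$-admissible via its unique preimage in $B$. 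Either formulation is routine; I would present the global version as it is cleanest.
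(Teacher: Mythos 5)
Your proof is correct and takes essentially the same approach as the paper's, which is a one-sentence sketch (``$\Exp$ is a local homeomorphism and the conditions of $\Phi$-admissibility are open''); you simply spell it out, and indeed your $U$ satisfies $\Exp(U)=\Adm(\Phi)$ exactly. One small inaccuracy in your aside: since $\alpha_i$ is $\C$-linear, $\alpha_i^{-1}(k)$ for $k\in\Z_{>0}$ is a \emph{complex} affine hyperplane of real codimension two, not a real hyperplane of codimension one --- but this does not affect the argument, which only needs $\alpha_i^{-1}(\Z_{>0})$ to be closed, and that follows from $\Z_{>0}$ being a discrete (hence closed) subset of $\C$.
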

\begin{proof} This is because the mapping $\Exp$ is a local homeomorphism and because the conditions of $\Phi$-admissibility are open conditions.
\end{proof}
\begin{Definition} Let $S\subset\Phi$ be a subset. Then we define $\mathbb{T}(S):=\{t\in\mathbb{T}~|~\tilde{\alpha}_i(t)=1~\forall\alpha_i\in S\}$ and $\mathbb{T}(S)^\circ:=\{t\in\mathbb{T}~|~\tilde{\alpha}_i(t)=1~\forall\alpha_i\in S\text{ and }\tilde{\alpha}_i(t)\ne1~\forall\alpha_i\notin S\}$.
\end{Definition}
%\begin{Remark} 
%For any given $S\subset\Phi$, 
Since $\mathbb{T}(S)=\bigcap_{\alpha\in S}\ker\tilde{\alpha}$, it is a subtorus of $\T$. Moreover $\mathbb{T}(S)^\circ\subset\mathbb{T}(S)$ is a Zariski open subset.
%\end{Remark}
Notice that $\T(S)$ is disconnected in general. We denote $C(S):=\T(S)/\T(S)_1$, where $\T(S)_1$ is the identity component. For $u\in C(S)$, we denote by $\T(S)_u$ the corresponding connected component. 
We can write $\mathbb{T}(S)=\sqcup_{u\in C(S)}\mathbb{T}(S)_u$ and $\mathbb{T}(S)^\circ=\sqcup_{u\in C(S)}\mathbb{T}(S)^\circ_u$ as disjoint union of their connected components, where 
$\mathbb{T}(S)^\circ_u:=\mathbb{T}(S)_u\cap\mathbb{T}(S)^\circ$. It is then clear that, for any given $\Phi$, we have 
$$\mathbb{T}=\bigsqcup_{S\subset\Phi}\mathbb{T}(S)^\circ=\bigsqcup_{S\subset\Phi}\bigsqcup_{u\in C(S)}\mathbb{T}(S)^\circ_u.$$
\begin{Proposition}\label{admissispreserved1} Fix $S\subset\Phi$. Let $t\in\mathbb{T}(S)^\circ_u$ be a $\Phi$-admissible element. Then any $t'\in\mathbb{T}(S)^\circ_u$ is $\Phi$-admissible.
\end{Proposition}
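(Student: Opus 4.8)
The plan is to fix a witness for the admissibility of $t$ and perturb it into a witness for $t'$, exploiting that $t$ and $t'$ lie in the same connected component of $\mathbb{T}(S)$. Concretely, since $t\in\mathbb{T}(S)^\circ_u$ is $\Phi$-admissible, choose $v\in V$ with $\Exp(v)=t$ and $\alpha_i(v)\notin\mathbb{Z}_{>0}$ for all $\alpha_i\in\Phi$. The first thing to record is that the stratum refines this: for $\alpha_i\in S$ we have $\tilde{\alpha}_i(t)=1$, hence $\exp(2\pi\sqrt{-1}\,\alpha_i(v))=1$, so $\alpha_i(v)\in\mathbb{Z}$ and therefore $\alpha_i(v)\in\mathbb{Z}_{\leq 0}$; while for $\alpha_i\notin S$ we have $\tilde{\alpha}_i(t)\neq 1$, hence $\alpha_i(v)\notin\mathbb{Z}$.

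Next I would, given $t'\in\mathbb{T}(S)^\circ_u$, construct a lift $v'\in V$ of $t'$ with $\alpha_i(v')=\alpha_i(v)$ for all $\alpha_i\in S$. This is where the hypothesis that $t,t'$ lie in the same component $\mathbb{T}(S)_u$ is used. Set $V_S:=\bigcap_{\alpha_i\in S}\ker\alpha_i$; since each $\alpha_i$ is integral on $\Lambda$, the subspace $V_S$ is rational, $\Lambda\cap V_S$ is a full lattice in it, and $\Exp(V_S)$ is a subtorus of $\mathbb{T}$. Comparing dimensions, $\dim_{\mathbb{C}}V_S=n-\operatorname{rank}\{\alpha_i:\alpha_i\in S\}=\dim\mathbb{T}(S)$, and since $\Exp(V_S)$ is connected and contained in $\mathbb{T}(S)$, it must equal the identity component $\mathbb{T}(S)_1$. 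Because $\mathbb{T}(S)_u=t\cdot\mathbb{T}(S)_1$, I can write $t'=t\cdot\Exp(w)$ with $w\in V_S$ and put $v':=v+w$; then $\Exp(v')=t'$ and $\alpha_i(v')=\alpha_i(v)$ for $\alpha_i\in S$. (Alternatively, and perhaps more directly: pick a path in $\mathbb{T}(S)_u$ from $t$ to $t'$, lift it through the covering map $\Exp$ starting at $v$, and let $v'$ be the endpoint; for $\alpha_i\in S$ the function $\alpha_i(\cdot)$ along the lifted path is continuous with values in the discrete set $\mathbb{Z}$, hence constant.)

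The final step is routine: check that $v'$ witnesses the $\Phi$-admissibility of $t'$. For $\alpha_i\in S$, $\alpha_i(v')=\alpha_i(v)\in\mathbb{Z}_{\leq 0}$, so $\alpha_i(v')\notin\mathbb{Z}_{>0}$. For $\alpha_i\notin S$, since $t'\in\mathbb{T}(S)^\circ$ we have $\tilde{\alpha}_i(t')\neq 1$, so $\alpha_i(v')\notin\mathbb{Z}$, and in particular $\alpha_i(v')\notin\mathbb{Z}_{>0}$. Hence $t'$ is $\Phi$-admissible.

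I expect the only real obstacle to be the construction of $w\in V_S$ in the middle step, i.e.\ showing the ``difference'' $t'/t$ admits a lift annihilated by every $\alpha_i\in S$. This genuinely requires $t'/t$ to lie in the identity component of $\mathbb{T}(S)$: an arbitrary lift of a general element of $\mathbb{T}(S)$ only has $\alpha_i$-values in $\mathbb{Z}$, not in $\{0\}$, and there is no reason these can be killed by a lattice translate unless we are in $\mathbb{T}(S)_1=\Exp(V_S)$. Once that point is settled, the rest is bookkeeping with the exponential sequence.
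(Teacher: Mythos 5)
Your proof is correct. The primary argument — construct $v' = v + w$ with $w$ in $V_S = \bigcap_{\alpha_i\in S}\ker\alpha_i$, observing that $V_S$ is rational (hence $\Exp(V_S)$ is a closed subtorus), that $\dim\Exp(V_S)=\dim\mathbb{T}(S)$ forces $\Exp(V_S)=\mathbb{T}(S)_1$, and that $\alpha_i(v')=\alpha_i(v)$ for $\alpha_i\in S$ while $\alpha_i(v')\notin\mathbb{Z}$ for $\alpha_i\notin S$ — is a clean, direct version of the same underlying idea the paper uses. The paper instead works in $\Exp^{-1}(\mathbb{T}(S)_u)=\bigsqcup_k G_k$ (each $G_k$ being an affine translate of your $V_S$), picks $v,v'$ in the same $G^\circ_k$, and argues by contradiction via a continuous path $v_s$ in $G^\circ_k$: if some $\alpha_i(v')\in\mathbb{Z}_{>0}$ then necessarily $\alpha_i\in S$, hence $\alpha_i(v_s)\in\mathbb{Z}$ for all $s$ (since $\tilde\alpha_i(t_s)=1$), so $\alpha_i(v_s)$ is a constant integer, contradicting $\alpha_i(v_0)\le 0 < \alpha_i(v_1)$. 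Your ``alternative'' path-lifting sketch is essentially that argument. The only genuine difference is that your main route is constructive (explicitly produce the witness $v'$) while the paper's is by contradiction; both hinge on the same rigidity of $\alpha_i(\cdot)$, $\alpha_i\in S$, over a connected component of the stratum. Your version also makes more transparent exactly where connectedness of the component is used — namely to guarantee $t'/t\in\Exp(V_S)$ — which the paper leaves implicit in the choice of $G_k$.
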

\begin{proof} By hypothesis of $\Phi$-admissibility, there exists $v=(v_1,\dots,v_n)\in V$ such that $\Exp(v)=t$ and  $\alpha_i(v)\notin\mathbb{Z}_{>0}~\forall i=1,\dots,r$.

Consider the exponential map $\Exp\colon V\to \mathbb{T}$. Then the inverse image $\Exp^{-1}(\mathbb{T}(S)_u)=\sqcup_kG_k$ is a disjoint union of countably many affine subspaces $G_k$ of $V$. Similarly $\Exp^{-1}(\mathbb{T}(S)^\circ_u)=\sqcup_kG^\circ_k$, where $G^\circ_k\subset G_k$ is the complement of countably many (locally finite) subspaces. In particular, $G_k^\circ$ is a connected open subset of $G_k$. We can now find $k$ such that $v\in G^\circ_k$. Moreover, given $t'\in\mathbb{T}(S)^\circ_u$, we can choose $v'=(v_1',\dots,v_n')\in G^\circ_k$ such that $\Exp(v')=t'$. We claim that $v'$ satisfies $\alpha_i(v')\notin\mathbb{Z}_{>0}~\forall i=1,\dots,n$.

Suppose that there exists $i=1,\dots,r$ such that $\alpha_i(v')\in\mathbb{Z}_{>0}$. Since $G^\circ_k$ is connected, there exists a continuous family $v_s\colon [0,1]\to G^\circ_k$ such that $v_0=v$ and $v_1=v'$. This implies that $\alpha_i(v_0)\notin\mathbb{Z}_{>0}$ but $\alpha_i(v_1)\in\mathbb{Z}_{>0}$. If we call $t_s=\Exp(v_s)$, then $\tilde{\alpha}_i(t_s)=\Exp(\alpha_i(v_s))=1$ for all $s$. However, this is impossible because $\alpha_i(v_s)$ is not constant.
\end{proof}
\begin{Corollary}\label{nonadmissispreserved1} Let $t\in\mathbb{T}(S)^\circ_u$ be a non $\Phi$-admissible element. Then any $t'\in\overline{\mathbb{T}(S)^\circ_u}$ is non $\Phi$-admissible, where $\overline{\mathbb{T}(S)^\circ_u}$ is the topological closure of $\mathbb{T}(S)^\circ_u$.
\end{Corollary}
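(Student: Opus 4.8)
The plan is to combine Proposition \ref{admissispreserved1} with Lemma \ref{lem:open}; the statement is a short formal consequence of the two. First, read Proposition \ref{admissispreserved1} in contrapositive form: if a single point of $\mathbb{T}(S)^\circ_u$ fails to be $\Phi$-admissible, then no point of $\mathbb{T}(S)^\circ_u$ is $\Phi$-admissible. Thus, from the hypothesis that $t\in\mathbb{T}(S)^\circ_u$ is non $\Phi$-admissible, one immediately obtains the inclusion $\mathbb{T}(S)^\circ_u\subseteq\NonAdm(\Phi)$.

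Next, I would invoke Lemma \ref{lem:open}: since $\Adm(\Phi)$ is open in $\mathbb{T}$, its complement $\NonAdm(\Phi)$ is closed. Taking topological closures in the inclusion $\mathbb{T}(S)^\circ_u\subseteq\NonAdm(\Phi)$ then gives
$$
\overline{\mathbb{T}(S)^\circ_u}\subseteq\overline{\NonAdm(\Phi)}=\NonAdm(\Phi),
$$
which is precisely the assertion that every $t'\in\overline{\mathbb{T}(S)^\circ_u}$ is non $\Phi$-admissible.

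There is essentially no obstacle here: the whole argument uses only the constancy of (non-)admissibility across each open stratum $\mathbb{T}(S)^\circ_u$ and the closedness of the non-admissible locus. In particular one need not describe $\overline{\mathbb{T}(S)^\circ_u}$ explicitly — though it is worth remarking that this closure equals the connected component $\mathbb{T}(S)_u$, since $\mathbb{T}(S)^\circ_u$ is a nonempty Zariski-open subset of the irreducible variety $\mathbb{T}(S)_u$, so in fact the corollary says that non $\Phi$-admissibility propagates from a point of $\mathbb{T}(S)^\circ_u$ to the entire component $\mathbb{T}(S)_u$.
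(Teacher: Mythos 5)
Your proof is correct and is exactly the argument the paper uses (Proposition \ref{admissispreserved1} for constancy on the stratum, plus closedness of $\NonAdm(\Phi)$ from Lemma \ref{lem:open}); you have simply written out the details the paper leaves implicit. Your closing observation that $\overline{\mathbb{T}(S)^\circ_u}=\mathbb{T}(S)_u$ also matches the Remark immediately following the corollary in the paper.
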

\begin{proof} The statement follows from Proposition \ref{admissispreserved1} and the fact that non $\Phi$-admissibility is a closed property.
\end{proof}
\begin{Remark} Since $\mathbb{T}(S)_u$ is irreducible, $\mathbb{T}(S)^\circ_u\ne\emptyset$ implies $\overline{\mathbb{T}(S)^\circ_u}=\mathbb{T}(S)_u$.
\end{Remark}
\begin{Corollary} Let us suppose $\mathbb{T}(S)$ is connected. Then all $t\in\mathbb{T}(S)^\circ$ are $\Phi$-admissible.
\end{Corollary}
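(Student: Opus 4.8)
The plan is to lift $\mathbb{T}(S)$ through a rational linear subspace of $V$ and read off $\Phi$-admissibility directly from the lift, so that Proposition~\ref{admissispreserved1} is not even needed.

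First I would unwind the hypothesis. If $\mathbb{T}(S)$ is connected, then $C(S)$ is trivial, so $\mathbb{T}(S)=\mathbb{T}(S)_1$ and $\mathbb{T}(S)^\circ=\mathbb{T}(S)^\circ_u$ for the identity class $u$; in particular, by Proposition~\ref{admissispreserved1} it would already suffice to exhibit one $\Phi$-admissible point of $\mathbb{T}(S)^\circ$ (and if $\mathbb{T}(S)^\circ=\emptyset$ there is nothing to prove). However, I expect the cleanest argument shows that \emph{every} point of $\mathbb{T}(S)^\circ$ is admissible at once.

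Next, set $W:=\{v\in V\mid\alpha_i(v)=0\ \text{for all}\ \alpha_i\in S\}$. Since the $\alpha_i$ are integral, $W$ is a $\C$-subspace of $V$ defined over $\Q$, so $W\cap\Lambda$ is a full lattice in $W$ and $\Exp(W)=W/(W\cap\Lambda)$ is a subtorus of $\mathbb{T}$. For $v\in W$ and $\alpha_i\in S$ one has $\tilde{\alpha}_i(\Exp(v))=\Exp(\alpha_i(v))=1$, hence $\Exp(W)\subseteq\mathbb{T}(S)$. Comparing dimensions, $\dim_\C W=n-\operatorname{rank}\{\alpha_i\mid\alpha_i\in S\}$ equals $\dim_\C\mathbb{T}(S)$ (the $\Z$-rank of the integral forms $\alpha_i|_\Lambda$ agrees with their $\C$-rank), so $\Exp(W)$ and $\mathbb{T}(S)$ are closed subgroups of the same dimension; as $\mathbb{T}(S)$ is connected, $\Exp(W)=\mathbb{T}(S)$. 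Equivalently, every $t\in\mathbb{T}(S)$ admits a lift $v\in W$ with $\Exp(v)=t$.

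Finally, take $t\in\mathbb{T}(S)^\circ$ and choose $v\in W$ with $\Exp(v)=t$. For $\alpha_i\in S$ we have $\alpha_i(v)=0\notin\mathbb{Z}_{>0}$ by the definition of $W$; for $\alpha_i\notin S$, the defining inequality of $\mathbb{T}(S)^\circ$ gives $\tilde{\alpha}_i(t)\ne1$, i.e.\ $\Exp(\alpha_i(v))\ne1$, hence $\alpha_i(v)\notin\mathbb{Z}$ and in particular $\alpha_i(v)\notin\mathbb{Z}_{>0}$. Thus $v$ witnesses the $\Phi$-admissibility of $t$. The one step needing care is the identification $\Exp(W)=\mathbb{T}(S)$: one must observe that $W$ is rational so that $\Exp(W)$ is a genuine closed subtorus, carry out the dimension count, and then invoke connectedness of $\mathbb{T}(S)$ to upgrade the evident inclusion to an equality — this is exactly where the hypothesis is used, since a point in a non-identity component of a disconnected $\mathbb{T}(S)$ would fail to lift into $W$. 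Everything else is formal.
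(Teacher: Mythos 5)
Your proof is correct, but it takes a genuinely different and more direct route than the paper. The paper's argument is indirect: it uses that $1$ is admissible, that $\Adm(\Phi)$ is open (Lemma~\ref{lem:open}), and that $\mathbb{T}(S)^\circ$ is dense in the irreducible $\mathbb{T}(S)$ so that some point of $\mathbb{T}(S)^\circ$ lies in the open set $\Adm(\Phi)$; it then invokes Proposition~\ref{admissispreserved1} (proved by a homotopy argument in $V$) to propagate admissibility to all of $\mathbb{T}(S)^\circ = \mathbb{T}(S)^\circ_1$. You instead produce an explicit admissible lift for \emph{every} $t\in\mathbb{T}(S)^\circ$ simultaneously, by observing that the rational subspace $W=\{v\in V\mid \alpha(v)=0\ \forall\alpha\in S\}$ satisfies $\Exp(W)=\mathbb{T}(S)_1$, so connectedness of $\mathbb{T}(S)$ lets you lift any $t$ into $W$ and read off the inequalities directly (zeros on $S$, non-integers on $\Phi\setminus S$). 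All the steps hold up: $W$ is rational because the $\alpha_i$ are integral on $\Lambda$, so $W\cap\Lambda$ is saturated of full rank and $\Exp(W)$ is a genuine subtorus, and the dimension comparison is legitimate since the $\Z$-rank of the $\alpha_i|_\Lambda$ agrees with their $\C$-rank. It is worth noting that your lemma $\Exp(W)=\mathbb{T}(S)_1$ is precisely the complex analogue of the paper's later Lemma~\ref{lemmaexpt1com} ($\Exp(\mathcal{D}_0(S))=\mathbb{T}_\R(S)_1$), so your argument in effect anticipates that piece of machinery; it also yields the stronger Corollary~\ref{connect1givadmis} (admissibility of all $t\in\mathbb{T}(S)^\circ_1$, with no connectedness hypothesis on $\mathbb{T}(S)$) with no additional work, which the paper states separately. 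The trade-off is that the paper's route reuses Proposition~\ref{admissispreserved1}, which it needs anyway for the general (disconnected) case, whereas your route is self-contained and more explicit about why the identity component is special.
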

\begin{proof} This follows from Lemma \ref{lem:open}, Proposition \ref{admissispreserved1} and the fact that $1\in\overline{\mathbb{T}(S)^\circ}$. 
\end{proof}
In general, we have the following. 
\begin{Corollary}\label{connect1givadmis} All $t\in\mathbb{T}(S)^\circ_1$ are $\Phi$-admissible.
\end{Corollary}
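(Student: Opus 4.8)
The plan is to deduce the statement from Proposition \ref{admissispreserved1}. That proposition says that $\Phi$-admissibility is constant on each connected piece $\mathbb{T}(S)^\circ_u$; since $\mathbb{T}(S)^\circ_1$ is precisely the piece lying in the identity component $\mathbb{T}(S)_1$, it suffices to exhibit a \emph{single} $\Phi$-admissible point in $\mathbb{T}(S)^\circ_1$. (If $\mathbb{T}(S)^\circ_1=\emptyset$ the assertion is vacuous, so we may assume it is nonempty.)

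To construct such a point, let $W\subset V$ be the linear subspace with $\Exp(W)=\mathbb{T}(S)_1$; note $0\in W$, so $1=\Exp(0)\in\mathbb{T}(S)_1$. For $\alpha_i\in S$ the character $\tilde\alpha_i$ is identically $1$ on $\mathbb{T}(S)_1$, hence $\alpha_i$ maps the connected set $W\ni 0$ into the discrete set $\mathbb{Z}$, so $\alpha_i|_W\equiv 0$. Now pick any $v_0\in W$ with $\Exp(v_0)\in\mathbb{T}(S)^\circ_1$; the condition $\tilde\alpha_i(\Exp(v_0))\ne 1$ for $\alpha_i\notin S$ means $\alpha_i(v_0)\notin\mathbb{Z}$, in particular $\alpha_i(v_0)\ne 0$. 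Choose a real number $s$ with $0<s<\min\{\,|\alpha_i(v_0)|^{-1}:\alpha_i\notin S\,\}$ and set $v:=sv_0\in W$. Then $\alpha_i(v)=0$ for $\alpha_i\in S$, while $0<|\alpha_i(v)|<1$ for $\alpha_i\notin S$; hence $\alpha_i(v)\notin\mathbb{Z}_{>0}$ for all $i$, and $\alpha_i(v)\notin\mathbb{Z}$ for $\alpha_i\notin S$. The latter shows $\Exp(v)\in\mathbb{T}(S)^\circ_1$, and the former shows $\Exp(v)$ is $\Phi$-admissible.

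With one $\Phi$-admissible point in $\mathbb{T}(S)^\circ_1$ in hand, Proposition \ref{admissispreserved1} (applied with $u=1$) yields that every $t\in\mathbb{T}(S)^\circ_1$ is $\Phi$-admissible, as claimed. A slicker but less self-contained variant replaces the explicit construction by the following: $1$ is $\Phi$-admissible (the Example after Definition \ref{defphiadmiss}), $\Adm(\Phi)$ is open (Lemma \ref{lem:open}), and the nonempty Zariski-open set $\mathbb{T}(S)^\circ_1$ is dense in the analytic topology in the irreducible variety $\mathbb{T}(S)_1\ni 1$; therefore $\Adm(\Phi)\cap\mathbb{T}(S)^\circ_1\ne\emptyset$, and one concludes as before.

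There is no genuinely hard step here — the real content sits in Proposition \ref{admissispreserved1}. The only points needing a little care are the identity $\alpha_i|_W\equiv 0$ for $\alpha_i\in S$ (equivalently, in the density variant, the standard fact that a nonempty Zariski-open subset of an irreducible complex variety is dense for the analytic topology); the rest is bookkeeping.
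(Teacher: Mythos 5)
Your proposal is correct, and your ``slicker variant'' is exactly the argument the paper has in mind: the paper does not print a separate proof of this corollary, but precedes it with ``In general, we have the following,'' and the previous corollary (for $\mathbb{T}(S)$ connected) is proved precisely by combining Lemma~\ref{lem:open} (openness of $\Adm(\Phi)$), the admissibility of $1$, Proposition~\ref{admissispreserved1}, and the density of $\mathbb{T}(S)^\circ$ in $\overline{\mathbb{T}(S)^\circ}\ni 1$; replacing $\mathbb{T}(S)^\circ$ by $\mathbb{T}(S)^\circ_1$ gives the present statement. Your primary, explicit construction is a valid and self-contained alternative: you identify $W$ with the linear lift $\mathcal{D}_0(S)=\{v\in V\mid\alpha(v)=0\ \forall\alpha\in S\}$ of $\mathbb{T}(S)_1$, and the scaling trick ($v=sv_0$ with $s$ small) directly produces a witness $v$ with $\alpha(v)=0$ on $S$ and $|\alpha(v)|<1$ off $S$, so $\Exp(v)\in\mathbb{T}(S)^\circ_1$ is visibly admissible. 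This avoids invoking the density of a Zariski-open set and instead anticipates the subspace $\mathcal{D}_0(S)$ that the paper introduces later (Lemma~\ref{lemmaexpt1com}, Theorem~\ref{thm:positivity}). The only implicit ingredient in your explicit route is the standard Lie-theoretic fact that the identity component $\mathbb{T}(S)_1$ lifts to a linear subspace $W\subset V$ under $\Exp$, which is unproblematic here since subtori of $(\C^*)^n$ correspond to rational subspaces of $\C^n$.
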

%The following is a direct consequence of the previous discussion.
Using the decomposition 
$$
\T=
\bigsqcup_{S\subset\Phi}\T(S)^\circ=
\bigsqcup_{S\subset\Phi}\bigsqcup_{u\in C(S)}\T(S)_u^\circ, 
$$
we have the following. 
\begin{Theorem}\label{writingofadmisnonadmiss} We have that
$$\Adm(\Phi)=\bigsqcup_{S\subset\Phi}\bigsqcup_{\stackrel{u\in C(S)~\exists t\in\mathbb{T}(S)^\circ_u}{\mbox{\tiny $\Phi$-admissible}}}\mathbb{T}(S)^\circ_u,$$
and
$$\NonAdm(\Phi)=\bigsqcup_{S\subset\Phi}\bigsqcup_{\stackrel{u\in C(S) ~\exists t\in\mathbb{T}(S)^\circ_u}{\mbox{\tiny non $\Phi$-admissible}}}\mathbb{T}(S)^\circ_u=$$
$$=\bigcup_{S\subset\Phi}\bigcup_{\stackrel{u\in C(S) ~\exists t\in\mathbb{T}(S)_u^\circ}{\mbox{\tiny non $\Phi$-admissible}}}\mathbb{T}(S)_u.$$
\end{Theorem}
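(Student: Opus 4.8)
The plan is to deduce everything from the partition
$$\T=\bigsqcup_{S\subset\Phi}\bigsqcup_{u\in C(S)}\T(S)_u^\circ$$
together with the fact, contained in Proposition \ref{admissispreserved1}, that $\Phi$-admissibility is constant on each piece $\T(S)_u^\circ$: either every point of $\T(S)_u^\circ$ is $\Phi$-admissible, or none is. First I would fix a point $t\in\T$ and let $(S,u)$ be the unique pair with $t\in\T(S)_u^\circ$. If $t$ is $\Phi$-admissible, then by Proposition \ref{admissispreserved1} the whole piece $\T(S)_u^\circ$ is contained in $\Adm(\Phi)$; if $t$ is non-$\Phi$-admissible, then by the contrapositive of the same proposition no point of $\T(S)_u^\circ$ is admissible, so $\T(S)_u^\circ\subset\NonAdm(\Phi)$. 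Running over all $t\in\T$ and regrouping the pieces according to which of the two alternatives occurs yields exactly the first formula for $\Adm(\Phi)$ and the first formula for $\NonAdm(\Phi)$ in the statement, the disjointness being inherited from the partition (a piece with $\T(S)_u^\circ=\emptyset$ satisfies neither $\exists$-condition and contributes to neither union).

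For the second description of $\NonAdm(\Phi)$ --- the one in terms of the closed subtori $\T(S)_u$ --- I would prove the two inclusions separately. For ``$\supseteq$'', fix a pair $(S,u)$ admitting a non-$\Phi$-admissible point $t\in\T(S)_u^\circ$. Since $\T(S)_u$ is irreducible and $\T(S)_u^\circ\ne\emptyset$, the Remark following Corollary \ref{nonadmissispreserved1} gives $\overline{\T(S)_u^\circ}=\T(S)_u$; then Corollary \ref{nonadmissispreserved1} shows every point of $\T(S)_u=\overline{\T(S)_u^\circ}$ is non-$\Phi$-admissible, i.e.\ $\T(S)_u\subset\NonAdm(\Phi)$. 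Taking the union over all such $(S,u)$ gives the inclusion. For ``$\subseteq$'', it suffices to note that, by the first description, $\NonAdm(\Phi)=\bigsqcup\T(S)_u^\circ$ over the relevant pairs, and each such piece satisfies $\T(S)_u^\circ\subset\T(S)_u$. Hence the two sets coincide; the union over the closed subtori is written with $\bigcup$ rather than $\bigsqcup$ precisely because distinct $\T(S)_u$ may overlap.

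There is essentially no hard analytic content here: the substance was already established in Proposition \ref{admissispreserved1} and Corollary \ref{nonadmissispreserved1}. The only point that needs a little care is the bookkeeping --- checking that the same index set $\{(S,u)\ :\ \exists\, t\in\T(S)_u^\circ \text{ non-}\Phi\text{-admissible}\}$ governs both descriptions of $\NonAdm(\Phi)$, and that replacing the open pieces $\T(S)_u^\circ$ by their closures $\T(S)_u$ neither introduces admissible points (ruled out by Corollary \ref{nonadmissispreserved1}) nor breaks the equality with $\NonAdm(\Phi)$ (since $\NonAdm(\Phi)=\T\setminus\Adm(\Phi)$ is closed by Lemma \ref{lem:open}, and each $\T(S)_u^\circ$ is dense in $\T(S)_u$ by the Remark). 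I would spell out these verifications explicitly but briefly, since they are routine.
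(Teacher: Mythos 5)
Your proof is correct and takes essentially the same route the paper has in mind: the paper states the theorem with no explicit proof, presenting it as an immediate consequence of the partition $\T=\bigsqcup_{S\subset\Phi}\bigsqcup_{u\in C(S)}\T(S)_u^\circ$, and your argument simply spells out the bookkeeping using Proposition \ref{admissispreserved1} (constancy of admissibility on each piece), Corollary \ref{nonadmissispreserved1}, and the Remark that $\overline{\T(S)_u^\circ}=\T(S)_u$ when $\T(S)_u^\circ\neq\emptyset$. The verification that passing from $\T(S)_u^\circ$ to $\T(S)_u$ in the last expression neither adds admissible points nor loses non-admissible ones is handled exactly as intended.
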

\begin{Remark}
\label{writingofadmisnonadmissremrk} From the previous Theorem, we see that $\NonAdm(\Phi)$ is the union of subtori of $\mathbb{T}$. Hence it is a Zariski closed subset of $\T$. 
\end{Remark}
From here to the end of the section, let us fix $t\in\mathbb{T}$. 
\begin{Definition} Given $\Phi$ as before, we define $\Phi_t:=\{\alpha\in\Phi~|~\tilde{\alpha}(t)=1\}\subset\Phi$.
\end{Definition}
Directly from the previous Definition, we have the following
\begin{Lemma} 
Let $S\subset\Phi$. 
Then $t\in\mathbb{T}(S)^\circ$ if and only if $S=\Phi_t$.
\end{Lemma}
Let $t\in\T$. Then the structure of the subtorus 
$\T(\Phi_t)$ is closely related to the admissibility of $t$. 
\begin{Proposition} 
\label{prop:T1}
If $t\in\mathbb{T}(\Phi_t)_1$, then $t$ is $\Phi$-admissible.
\end{Proposition}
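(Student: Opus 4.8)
The plan is to produce an explicit preimage of $t$ under $\Exp$ that realises the admissibility conditions, by exploiting the linear subspace of $V$ underlying the subtorus $\mathbb{T}(\Phi_t)_1$. Set
$$
W:=\bigcap_{\alpha\in\Phi_t}\ker(\alpha)\subset V,
$$
a complex-linear subspace through the origin. For any $w\in W$ and $\alpha\in\Phi_t$ we have $\tilde\alpha(\Exp(w))=\Exp(\alpha(w))=\Exp(0)=1$, so $\Exp(W)\subset\mathbb{T}(\Phi_t)$; since $W$ is connected and $0\in W$, in fact $\Exp(W)\subset\mathbb{T}(\Phi_t)_1$.

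The key step, and the one I expect to be the main obstacle, is the reverse inclusion $\mathbb{T}(\Phi_t)_1\subset\Exp(W)$, i.e.\ that every point of the identity component of the subtorus already lifts into $W$. The idea is to use that $\Exp$ is a local homeomorphism (Lemma \ref{lem:open}'s mechanism): choose a neighbourhood $U$ of $0$ in $V$ on which $\Exp$ is injective and small enough that $|\alpha(v)|<1$ for all $v\in U$ and all $\alpha\in\Phi_t$. Then $U\cap\Exp^{-1}(\mathbb{T}(\Phi_t))=U\cap W$ (an integer of absolute value $<1$ is $0$), hence $\Exp(U\cap W)=\Exp(U)\cap\mathbb{T}(\Phi_t)$ is an open neighbourhood of $1$ in $\mathbb{T}(\Phi_t)$. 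Thus $\Exp(W)$ is a subgroup of $\mathbb{T}(\Phi_t)$ containing an open neighbourhood of $1$, so it is an open (hence also closed) subgroup and therefore contains $\mathbb{T}(\Phi_t)_1$; together with the previous paragraph this gives $\Exp(W)=\mathbb{T}(\Phi_t)_1$. (This is essentially the standard description of a subtorus as the image of its rational tangent space, already implicit in the ``union of affine subspaces'' picture used in the proof of Proposition \ref{admissispreserved1}.)

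Granting that identification, the conclusion is immediate: pick $v\in W$ with $\Exp(v)=t$. For $\alpha\in\Phi_t$ we have $\alpha(v)=0\notin\mathbb{Z}_{>0}$; for $\alpha\in\Phi\setminus\Phi_t$ the definition of $\Phi_t$ gives $\tilde\alpha(t)\ne1$, so $\Exp(\alpha(v))\ne1$, i.e.\ $\alpha(v)\notin\mathbb{Z}$ and a fortiori $\alpha(v)\notin\mathbb{Z}_{>0}$. Hence $v$ satisfies $\alpha_i(v)\notin\mathbb{Z}_{>0}$ for every $\alpha_i\in\Phi$, which is exactly the definition of $t$ being $\Phi$-admissible. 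Alternatively, one can bypass the explicit construction altogether: by the Lemma characterising $\mathbb{T}(S)^\circ$ (namely $t\in\mathbb{T}(S)^\circ\iff S=\Phi_t$) the point $t$ automatically lies in $\mathbb{T}(\Phi_t)^\circ$, so the hypothesis $t\in\mathbb{T}(\Phi_t)_1$ places $t$ in $\mathbb{T}(\Phi_t)^\circ_1$ and Corollary \ref{connect1givadmis} applies verbatim; I would mention this as the quick route, with the computation above spelling out why it works.
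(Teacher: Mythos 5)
Your proof is correct. The closing paragraph (``Alternatively, one can bypass the explicit construction altogether\dots'') is precisely the paper's proof: since $S=\Phi_t$ forces $t\in\mathbb{T}(\Phi_t)^\circ$, the hypothesis places $t$ in $\mathbb{T}(\Phi_t)^\circ_1$ and Corollary~\ref{connect1givadmis} finishes. The bulk of your argument --- showing $\mathbb{T}(\Phi_t)_1=\Exp(W)$ with $W=\bigcap_{\alpha\in\Phi_t}\ker\alpha$, then picking $v\in W$ with $\Exp(v)=t$ and verifying $\alpha(v)\notin\mathbb{Z}_{>0}$ directly --- is a self-contained and more explicit alternative. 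In effect you are reproving, for the complex torus, what the paper records later as Lemma~\ref{lemmaexpt1com} (which identifies $\Exp(\mathcal{D}_0(S))$, i.e.\ $\Exp(W)$, with the identity component of $\mathbb{T}_\R(S)$). The paper's route economises by routing through the already-established decomposition $\mathbb{T}=\bigsqcup_S\mathbb{T}(S)^\circ$ and Corollary~\ref{connect1givadmis}; your route is longer but has the virtue of exhibiting a concrete lift $v$ witnessing admissibility, and it doesn't rely on the earlier corollary. Both are sound.
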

\begin{proof} By hypothesis, we obtain that $t\in\mathbb{T}(\Phi_t)^\circ_1$ and hence we conclude by Corollary \ref{connect1givadmis}.
\end{proof}
\begin{Corollary} If $\mathbb{T}(\Phi_t)$ is connected, then $t$ is $\Phi$-admissible.
\end{Corollary}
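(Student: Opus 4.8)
The plan is to reduce this statement immediately to Proposition \ref{prop:T1}, which already handles the case $t\in\mathbb{T}(\Phi_t)_1$. So the only real content is to check that the connectedness hypothesis forces $t$ to lie in the identity component of $\mathbb{T}(\Phi_t)$.

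First I would record the tautological observation that $t$ always belongs to $\mathbb{T}(\Phi_t)$: by the very definition of $\Phi_t=\{\alpha\in\Phi~|~\tilde{\alpha}(t)=1\}$ we have $\tilde{\alpha}(t)=1$ for every $\alpha\in\Phi_t$, which is precisely the condition for membership in $\mathbb{T}(\Phi_t)=\bigcap_{\alpha\in\Phi_t}\ker\tilde{\alpha}$. (Equivalently, this is the content of the Lemma characterizing $t\in\mathbb{T}(S)^\circ$ by $S=\Phi_t$, which in particular gives $t\in\mathbb{T}(\Phi_t)$.)

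Next I would invoke the hypothesis: since $\mathbb{T}(\Phi_t)$ is a connected algebraic group, it coincides with its identity component, i.e. $\mathbb{T}(\Phi_t)=\mathbb{T}(\Phi_t)_1$; phrased in the notation of the excerpt, $C(\Phi_t)$ is trivial, so $\mathbb{T}(\Phi_t)$ has a single connected component. Combining this with the previous paragraph gives $t\in\mathbb{T}(\Phi_t)_1$, and Proposition \ref{prop:T1} then yields that $t$ is $\Phi$-admissible.

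There is no serious obstacle here; the statement is essentially a one-line consequence of Proposition \ref{prop:T1}. The only points worth stating carefully are that the membership $t\in\mathbb{T}(\Phi_t)$ is automatic and should not be taken for granted, and that "connected" for the a priori disconnected subgroup $\mathbb{T}(\Phi_t)$ is exactly the assertion that its identity component is the whole of $\mathbb{T}(\Phi_t)$, which is what lets us apply the earlier proposition.
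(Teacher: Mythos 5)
Your proof is correct and matches the intended argument: the paper states this corollary immediately after Proposition~\ref{prop:T1} with no written proof, precisely because the deduction is the one you give — $t\in\mathbb{T}(\Phi_t)$ is automatic from the definition of $\Phi_t$, connectedness gives $\mathbb{T}(\Phi_t)=\mathbb{T}(\Phi_t)_1$, and Proposition~\ref{prop:T1} finishes.
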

We will see later that, under certain assumption, the converse to 
Proposition \ref{prop:T1} holds (Theorem \ref{thm:positivity}). 

Since the notion of $\Phi$-admissibility concerns only the real part of $\alpha_i$, we can restrict our attention to the real torus $\mathbb{T}_\R:=(S^1)^n=\Exp(V_\R)$. In fact, using the linear algebraic fact ``A system of linear equations 
with real coefficients has real solutions if and only if it has complex 
solutions'', we can prove the following key fact. 
$$
\T_\R(S)_u^\circ:=
\T(S)_u^\circ\cap \T_\R\neq\emptyset
\Longleftrightarrow
\T(S)_u^\circ\neq\emptyset, 
$$
where $S\subset\Phi$. As we have already seen, $\Adm(\Phi)$ is a 
disjoint union of the subsets of the form $\T(S)_u^\circ$. 
The above fact shows that 
it is enough to consider the real torus $\T_\R$. 
$$
\Adm(\Phi)=\bigsqcup_{S\subset\Phi}\bigsqcup_{\stackrel{u\in C(S) ~\exists t\in\mathbb{T_\R}(S)^\circ_u}{\mbox{\tiny $\Phi$-admissible}}}\mathbb{T}(S)^\circ_u,$$
%Indeed, if we consider the projection 
%$\pi:\mathbb{T}=(\C^*)^n=(S^1)^n\times \R_{>0}^n\longrightarrow \mathbb{T}_\R=(S^1)^n$, 
%it is easily seen that $t\in\mathbb{T}$ is $\Phi$-admissible if and only if 
%so is $\pi(t)\in\mathbb{T}_\R$. 

Notice that if we consider $v\in V$ such that $\Exp(v)=t$ then, 
\begin{equation}
\label{eq:equiv}
\alpha(v)\in\mathbb{Z}\iff 
\tilde{\alpha}(v)=1 \iff 
\alpha\in\Phi_t
\end{equation}
for $\alpha\in\Phi$. Hence $t\in\T_\R$ is $\Phi$-admissible if and only if 
there exists $v\in V_\R$ such that $\Exp(v)=t$ and $\alpha(v)\leq 0$ for all 
$\alpha\in\Phi_t$. Now it seems natural to consider the following cone 
$\mathcal{D}(\Phi_t)$. 

\begin{Definition} Let $S\subset\Phi$. Define 
$\mathcal{D}(S):=\{v\in V_\R~|~\alpha(v)\le0,~\forall\alpha\in S\}$. We also define $\mathcal{D}_0(S):=\{v\in V~|~\alpha(v)=0~\forall\alpha\in S\}$. Note that $\mathcal{D}_0(S)$ is a linear subspace and $\mathcal{D}_0(S)\subset\mathcal{D}(S)\subset V_\R$.
\end{Definition}
Using the notion of the cone $\mathcal{D}(S)$, we can rewrite the 
definition of the $\Phi$-admissibility as follows. 

%By the previous discussion it is clear that
%\begin{Proposition} $t\in\mathbb{T}_\R$ is $\Phi$-admissible (or equivalently $\Phi_t$-admissible) if and only if there exists $v\in V_\R$ such that $\exp(v)=t$ and $\alpha(x)\le0$ for all $\alpha\in\Phi_t$. 
%\end{Proposition}
%The previous Proposition can be rewritten as

\begin{Proposition}\label{admissopenconeintersect} Let $t\in\mathbb{T}_\R$ and choose an element $v_0\in V_\R$ such that $\Exp(v_0)=t$. Then $t$ is $\Phi$-admissible if and only if 
\begin{equation}
\label{eq:intersection}
(v_0+\Lambda)\cap\mathcal{D}(\Phi_t)\ne\emptyset. 
\end{equation}
\end{Proposition}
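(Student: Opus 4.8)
The plan is to deduce the statement directly from the reformulation obtained just above it: for $t\in\mathbb{T}_\R$, $\Phi$-admissibility is equivalent to the existence of $v\in V_\R$ with $\Exp(v)=t$ and $\alpha(v)\le 0$ for all $\alpha\in\Phi_t$. Granting this, the only work left is to identify the real fibre $\Exp^{-1}(t)\cap V_\R$ with the coset $v_0+\Lambda$, and to observe that the inequalities $\{\alpha(v)\le 0:\alpha\in\Phi_t\}$ cut out precisely $\mathcal{D}(\Phi_t)$ inside $V_\R$.

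First I would check that $\Lambda=\ker(\Exp)$ sits inside $V_\R$ — indeed $\Lambda\subset\Lambda\otimes\R=V_\R$ — so the restriction $\Exp\colon V_\R\to\mathbb{T}_\R$ still has kernel $\Lambda$. Consequently, for the chosen $v_0\in V_\R$ with $\Exp(v_0)=t$, one has $\{v\in V_\R\mid\Exp(v)=t\}=v_0+\Lambda$. I would also note that every $\alpha\in\Phi$ is real-valued on $V_\R$ (since $\alpha(V_\R)$ is the $\R$-span of $\alpha(\Lambda)\subset\mathbb{Z}$), so that the conditions $\alpha(v)\le 0$ make sense along $v_0+\Lambda$ and, by definition of $\mathcal{D}(\Phi_t)$, single out exactly the points of $(v_0+\Lambda)\cap\mathcal{D}(\Phi_t)$.

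Combining these two observations yields both implications. If $(v_0+\Lambda)\cap\mathcal{D}(\Phi_t)\ne\emptyset$, pick $v$ there; then $\Exp(v)=\Exp(v_0)=t$, and $\alpha(v)\le 0$ for $\alpha\in\Phi_t$, while for $\alpha\in\Phi\setminus\Phi_t$ one has $\Exp(\alpha(v))=\tilde{\alpha}(t)\ne1$, hence $\alpha(v)\notin\mathbb{Z}$; so in either case $\alpha(v)\notin\mathbb{Z}_{>0}$ and $t$ is $\Phi$-admissible (cf. \eqref{eq:equiv}). Conversely, if $t$ is $\Phi$-admissible, the reformulation quoted above furnishes $v\in V_\R$ with $\Exp(v)=t$ and $\alpha(v)\le 0$ for all $\alpha\in\Phi_t$, and such a $v$ lies in $(v_0+\Lambda)\cap\mathcal{D}(\Phi_t)$.

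The only step that is not a pure unwinding of definitions is the reduction from complex to real preimages encapsulated in the reformulation I rely on; but this is already proved in the text preceding the statement, via the elementary fact that a system of linear equations with real coefficients is solvable over $\R$ as soon as it is solvable over $\C$. So I do not anticipate any real difficulty: once the identity $\Exp^{-1}(t)\cap V_\R=v_0+\Lambda$ is in place, the proposition is immediate.
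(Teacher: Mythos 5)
Your proof is correct and follows essentially the same route as the paper, which simply observes that $\Exp^{-1}(t)=v_0+\Lambda$ and leaves the rest as an unwinding of the reformulation stated just above the Proposition. You fill in the details (that $\Lambda\subset V_\R$, that the $\alpha$'s are real-valued on $V_\R$, and that the inequalities cut out precisely $\mathcal{D}(\Phi_t)$), but the underlying argument is identical.
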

\begin{proof}
It is obvious from $\Exp^{-1}(t)=v_0+\Lambda$. 
\end{proof}
Thus the admissibility is reduced to the existence of certain lattice 
points in a cone. The following gives a sufficient condition for $t\in\T_\R$ to be $\Phi$-admissible. 
\begin{Proposition} If $\R\cdot\mathcal{D}(\Phi_t)=V_\R$, then $t$ is $\Phi$-admissible, where $\R\cdot\mathcal{D}(\Phi_t)$ is the subspace generated by $\mathcal{D}(\Phi_t)$.
\end{Proposition}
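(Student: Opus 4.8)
The plan is to deduce the statement from Proposition \ref{admissopenconeintersect}. First I would reduce to the case $t\in\T_\R$: the stratum $\T(\Phi_t)^\circ_u$ containing $t$ is nonempty, hence by the key fact recorded above it contains a real point $t'$, which satisfies $\Phi_{t'}=\Phi_t$ (so $\mathcal{D}(\Phi_{t'})=\mathcal{D}(\Phi_t)$) and is $\Phi$-admissible if and only if $t$ is (Proposition \ref{admissispreserved1}). So assume $t\in\T_\R$ and fix $v_0\in V_\R$ with $\Exp(v_0)=t$; by Proposition \ref{admissopenconeintersect} it suffices to prove $(v_0+\Lambda)\cap\mathcal{D}(\Phi_t)\neq\emptyset$.

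The hypothesis $\R\cdot\mathcal{D}(\Phi_t)=V_\R$ says that the closed convex cone $\mathcal{D}(\Phi_t)$ is full-dimensional, and I would make this explicit by producing an interior point. Pick $v_1,\dots,v_n\in\mathcal{D}(\Phi_t)$ spanning $V_\R$ and set $v^\ast:=v_1+\dots+v_n$. For each nonzero $\alpha\in\Phi_t$ one has $\alpha(v_j)\le 0$ for all $j$, with strict inequality for at least one $j$ (otherwise $\alpha$ would vanish on $V_\R=\mathrm{span}(v_1,\dots,v_n)$), so $\alpha(v^\ast)=\sum_j\alpha(v_j)<0$; a zero form in $\Phi_t$ imposes no condition. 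Hence $v^\ast$ lies in the open set $U:=\{v\in V_\R\mid\alpha(v)<0\text{ for all nonzero }\alpha\in\Phi_t\}\subseteq\mathcal{D}(\Phi_t)$, and $U$ contains a Euclidean ball $B(v^\ast,\varepsilon)$ for some $\varepsilon>0$.

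Finally I would use that $U$ is a cone, being cut out by homogeneous strict inequalities, together with the relative density of the lattice coset $v_0+\Lambda$. For every $N>0$ we have $B(Nv^\ast,N\varepsilon)=N\cdot B(v^\ast,\varepsilon)\subseteq U\subseteq\mathcal{D}(\Phi_t)$. Since $\Lambda\cong\Z^n$ is a full-rank lattice in $V_\R$, it has finite covering radius $R$, and so does the translate $v_0+\Lambda$; choosing $N$ with $N\varepsilon>R$, the ball $B(Nv^\ast,N\varepsilon)$ contains a point $w\in v_0+\Lambda$. Then $w\in(v_0+\Lambda)\cap\mathcal{D}(\Phi_t)$, and so $t$ is $\Phi$-admissible by Proposition \ref{admissopenconeintersect}.

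This argument is essentially soft; the small points requiring care are the reduction to the real torus, the description of the interior of $\mathcal{D}(\Phi_t)$ in case $\Phi_t$ contains the zero form, and the input that a full-rank lattice has finite covering radius (this is where the integrality hypothesis $\alpha_i(\Lambda)\subset\Z$ is implicitly used, as it makes $\mathcal{D}(\Phi_t)$ a genuine polyhedral cone in the real vector space $V_\R$). I expect the covering-radius/scaling step to be the conceptual heart of the proof.
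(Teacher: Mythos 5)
Your proposal is correct and follows essentially the same route as the paper: reduce via Proposition \ref{admissopenconeintersect} to finding a lattice coset point in the full-dimensional cone $\mathcal{D}(\Phi_t)$. The paper states the nonempty-interior and lattice-intersection facts without elaboration; you have simply supplied the standard justifications (the interior point $v^\ast$, and scaling combined with the finite covering radius of $v_0+\Lambda$), together with the routine reduction to $t\in\T_\R$ that the paper makes implicitly.
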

\begin{proof} From the assumption, $\mathcal{D}(\Phi_t)$ is a cone in $V_\R$ which has a non-empty interior. Hence, $(v_0+\Lambda)\cap\mathcal{D}(\Phi_t)\ne\emptyset$, where $v_0\in V_\R$ is such that $\Exp(v_0)=t$.
\end{proof}
\begin{Proposition}\label{prop:indep} 
If $\Phi_t$ is composed of linearly independent elements, then $t$ is $\Phi$-admissible.
\end{Proposition}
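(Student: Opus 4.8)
The plan is to use the criterion of Proposition~\ref{admissopenconeintersect}: choosing $v_0\in V_\R$ with $\Exp(v_0)=t$, we must produce a lattice point $\lambda\in\Lambda$ with $v_0+\lambda\in\mathcal{D}(\Phi_t)$, i.e. $\alpha(v_0+\lambda)\le 0$ for every $\alpha\in\Phi_t$. Write $\Phi_t=\{\alpha_{j_1},\dots,\alpha_{j_k}\}$ and note that, since these are linearly independent linear forms on $V$, the map $A\colon V_\R\to\R^k$, $w\mapsto(\alpha_{j_1}(w),\dots,\alpha_{j_k}(w))$, is surjective. Hence its restriction to the lattice, $A|_\Lambda\colon\Lambda\to\Z^k$ (the image lands in $\Z^k$ by the hypothesis $\alpha_i(\Lambda)\subset\Z$), has image a finite-index subgroup of $\Z^k$; in particular the image is Zariski dense and, more to the point, unbounded in every coordinate direction.

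First I would record that it suffices to find $\lambda\in\Lambda$ with $\alpha_{j_m}(\lambda)\le -\alpha_{j_m}(v_0)$ for each $m=1,\dots,k$, i.e. with $A(\lambda)$ componentwise below a fixed vector $c:=(-\alpha_{j_1}(v_0),\dots,-\alpha_{j_k}(v_0))\in\R^k$. Since $A(\Lambda)$ is a subgroup of $\Z^k$ of full rank $k$, pick a $\Z$-basis $w_1,\dots,w_k$ of $\Lambda$ mapping to vectors $A(w_1),\dots,A(w_k)$ spanning $\R^k$; then the set $\{\sum_m n_m A(w_m):n_m\in\Z\}$ is a full-rank lattice in $\R^k$, and such a lattice certainly contains points with all coordinates less than any prescribed bound (e.g. take $\lambda=-N(w_1+\dots+w_k)$ for large $N$ — no, that only works if the sum of columns is coordinatewise positive, so instead one argues via the dual: choose for each $m$ an element $\mu_m\in A(\Lambda)$ whose $m$-th coordinate is $\le -1$... ). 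Cleaner: since $A(\Lambda)\subset\Z^k$ has rank $k$, for each standard basis direction there is an element of $A(\Lambda)$ with strictly negative $m$-th coordinate; but to get all coordinates negative simultaneously one uses that a full-rank lattice meets the open negative orthant (it is a nonempty open cone, and a full-rank lattice intersects every nonempty open subset of a suitable bounded translate — more directly, $\{n_1 A(w_1)+\dots+n_k A(w_k)\}$ with $n_m\to -\infty$ along the ray dual to... ). The robust statement I would invoke: a subgroup of $\R^k$ of full rank $k$ intersects the interior of any full-dimensional closed cone, in particular the set $\{x\in\R^k: x_m\le c_m\ \forall m\}$, since that set contains translates of arbitrarily large boxes. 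Applying this to the cone $\{x:x_m\le 0\}$ shifted by $c$ gives the desired $\lambda$.

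Finally I would assemble: the $\lambda$ just produced satisfies $\alpha(v_0+\lambda)\le 0$ for all $\alpha\in\Phi_t$, so $v_0+\lambda\in(v_0+\Lambda)\cap\mathcal{D}(\Phi_t)\ne\emptyset$, and Proposition~\ref{admissopenconeintersect} yields that $t$ is $\Phi$-admissible. (For $t\in\T$ not necessarily in $\T_\R$, one first reduces to the real torus exactly as in the discussion preceding Definition of $\mathcal{D}(S)$, via equation~\eqref{eq:equiv} and the remark that $\Phi$-admissibility depends only on real parts: replace $v_0$ by its real part, which still lies in $\Exp^{-1}$ of a point with the same $\Phi_t$.) The main obstacle is the purely lattice-theoretic step — verifying that a full-rank subgroup of $\Z^k$, namely $A(\Lambda)$, contains a point with all coordinates below a prescribed threshold; the linear independence of $\Phi_t$ is exactly what guarantees $A$ is surjective and hence $A(\Lambda)$ has full rank $k$, which is where the hypothesis is used.
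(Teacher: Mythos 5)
Your argument is correct and follows essentially the same route as the paper: both rely on Proposition~\ref{admissopenconeintersect} together with the observation that linear independence of $\Phi_t$ makes $\mathcal{D}(\Phi_t)$ a full-dimensional cone, hence large enough (it contains balls of arbitrary radius) to meet the lattice translate $v_0+\Lambda$. The paper states this in one line; you unpack the lattice-meets-translated-orthant step via the surjection $A\colon V_\R\to\R^k$ and the full-rank subgroup $A(\Lambda)\subset\Z^k$, which is a legitimate (if somewhat meandering) elaboration of the same idea.
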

\begin{proof} Since independence implies that $\mathcal{D}(\Phi_t)$ is full dimensional and so $(v_0+\Lambda)\cap\mathcal{D}(\Phi_t)\ne\emptyset$, where $v_0\in V_\R$ is such that $\Exp(v_0)=t$.
\end{proof}

\begin{Corollary}
If $\Phi$ is composed of linearly independent elements, then every $t\in\T$ is $\Phi$-admissible.
\end{Corollary}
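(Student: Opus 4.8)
The plan is to reduce the statement to Proposition \ref{prop:indep}. The only conceptual input needed is the elementary fact that a subfamily of a linearly independent family is again linearly independent: thus, if $\Phi$ consists of linearly independent elements, then for every $t\in\T$ the subset $\Phi_t\subset\Phi$ consists of linearly independent elements as well. Granting this, each $t$ falls under Proposition \ref{prop:indep}. The one point that needs care is that Proposition \ref{prop:indep} (and Proposition \ref{admissopenconeintersect}) is set up on the real torus $\T_\R$, while here $t$ ranges over all of $\T$, so I would rerun the cone argument directly for a general $t$.

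So, fix $t\in\T$ and pick $v\in V$ with $\Exp(v)=t$. By \eqref{eq:equiv}, for every $\alpha\in\Phi\setminus\Phi_t$ we have $\alpha(v)\notin\Z$; since $\alpha(\Lambda)\subset\Z$, this still holds for every $v+\lambda$ with $\lambda\in\Lambda$, so the condition $\alpha(v+\lambda)\notin\Z_{>0}$ is automatic for such $\alpha$. For $\alpha\in\Phi_t$ we have $\alpha(v)\in\Z$, so $\alpha(v+\lambda)\notin\Z_{>0}$ is equivalent to $\alpha(v+\lambda)\le0$. Hence $t$ is $\Phi$-admissible provided there is a $\lambda\in\Lambda$ in the region $C:=\{u\in V_\R\mid \alpha(u)\le-\alpha(v)\ \text{for all}\ \alpha\in\Phi_t\}$, which is a translate of the cone $\mathcal{D}(\Phi_t)$. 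Since $\Phi_t$ is linearly independent, $\mathcal{D}(\Phi_t)$ is cut out by linearly independent inequalities, hence is full-dimensional in $V_\R$, and so is its translate $C$; a full-dimensional convex cone, and any translate of it, contains balls of arbitrarily large radius, and the full-rank lattice $\Lambda$ meets every sufficiently large ball, so $\Lambda\cap C\ne\emptyset$. Any such $\lambda$ makes $v+\lambda$ witness the $\Phi$-admissibility of $t$.

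Alternatively, one may argue along the lines of Theorem \ref{writingofadmisnonadmiss}: $\T$ is the disjoint union of the strata $\T(S)^\circ_u$, and such a stratum lies in $\Adm(\Phi)$ once it contains a single $\Phi$-admissible point; by the real-torus reduction recalled before, a nonempty stratum already meets $\T_\R$, and for $t'\in\T_\R(S)^\circ_u$ one has $\Phi_{t'}=S\subset\Phi$, which is linearly independent, so Proposition \ref{prop:indep} applies to $t'$ directly. The ``main obstacle'' is really just this bookkeeping — transporting the cone-and-lattice machinery from $\T_\R$ to all of $\T$ — since the actual content is the two facts already invoked: subfamilies of linearly independent families are linearly independent, and a full-dimensional cone meets every translate of $\Lambda$.
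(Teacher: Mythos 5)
Your proposal is correct and follows essentially the same route the paper implicitly intends: observe that $\Phi_t\subset\Phi$ inherits linear independence, then invoke Proposition~\ref{prop:indep} (the paper gives the Corollary without proof, immediately after that Proposition). Your extra care in re-deriving the cone-and-lattice argument for a general $v\in V$ rather than $v\in V_\R$ is a sensible tightening — the paper's Proposition~\ref{prop:indep} proof writes $v_0\in V_\R$, and one would otherwise lean on the earlier reduction to $\T_\R$ via the stratification — but it does not change the substance of the argument.
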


We now give a characterization for 
admissibility in terms of the dual cone. 
\begin{Definition} 
Let $S\subset\Phi$. Define $\Cone(S):=\sum_{\alpha\in S}\R_{\geq 0}\cdot\alpha$. 
\end{Definition}
Notice that $\Cone(S)$ is a cone in $V^*$ and $\mathcal{D}(S)$ is its 
dual cone (times $(-1)$). 
Furthermore, from the self-duality of convex cones 
(\cite{fulton1993introductiontoric}), we have 
\begin{equation}
\label{eq:dualcone}
\mathcal{D}(S)=-\Cone(S)^\vee, \Cone(S)=-\mathcal{D}(S)^\vee. 
\end{equation}
The next lemmas will be used later. 
\begin{Lemma}\label{lemmaexpt1com} 
Let $S\subset\Phi$. Then $\Exp(\mathcal{D}_0(S))=\mathbb{T}_\R(S)_1$.
\end{Lemma}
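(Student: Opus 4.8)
The plan is to pull everything back along the exponential covering $\Exp\colon V_\R\to\mathbb{T}_\R$ (where $\mathbb{T}_\R(S)=\mathbb{T}(S)\cap\mathbb{T}_\R$) and to recognise $\mathcal{D}_0(S)$ as the connected component through $0$ of the preimage $\Exp^{-1}(\mathbb{T}_\R(S))$, which will be a disjoint union of parallel affine subspaces; then the statement becomes the fact that the exponential of the Lie algebra of a torus is the whole identity component.

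First I would collect the elementary ingredients. Since $\alpha(\Lambda)\subset\Z$ and $\Lambda$ spans $V_\R$, each $\alpha\in S$ restricts to a real-valued linear form on $V_\R$; hence $\mathcal{D}_0(S)=\bigcap_{\alpha\in S}\ker\left(\alpha|_{V_\R}\right)$ is a real linear subspace of $V_\R$ (consistent with the remark $\mathcal{D}_0(S)\subset\mathcal{D}(S)\subset V_\R$), and $\psi:=\left(\alpha|_{V_\R}\right)_{\alpha\in S}\colon V_\R\to\R^{|S|}$ is linear with $\ker\psi=\mathcal{D}_0(S)$. By the commutative square defining $\tilde{\alpha}$, equivalently by \eqref{eq:equiv}, one has $\Exp(v)\in\mathbb{T}(S)$ if and only if $\alpha(v)\in\Z$ for all $\alpha\in S$, so, restricting $\Exp$ to $V_\R$, we get $\Exp^{-1}(\mathbb{T}_\R(S))=\psi^{-1}(\Z^{|S|})$. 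The inclusion ``$\subseteq$'' of the Lemma is then immediate: $\mathcal{D}_0(S)=\psi^{-1}(0)\subset\Exp^{-1}(\mathbb{T}_\R(S))$, so $H:=\Exp(\mathcal{D}_0(S))\subset\mathbb{T}_\R(S)$, and since $H$ is connected and contains $1=\Exp(0)$ it lies in the identity component $\mathbb{T}_\R(S)_1$.

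For the reverse inclusion I would show that $H$ is an open subgroup of $\mathbb{T}_\R(S)_1$, hence all of it. The crucial observation is that $\image(\psi)\cap\Z^{|S|}$ is a subgroup of $\Z^{|S|}$, hence a discrete subset of $\R^{|S|}$; in particular $0$ is isolated in it, so $\mathcal{D}_0(S)=\psi^{-1}(0)$ is open in $\Exp^{-1}(\mathbb{T}_\R(S))=\psi^{-1}(\Z^{|S|})$. Choose an open ball $B\subset V_\R$ around $0$ small enough that $B\cap\Exp^{-1}(\mathbb{T}_\R(S))=B\cap\mathcal{D}_0(S)$. Since $\Exp$ is a local homeomorphism (as already used in Lemma \ref{lem:open}) it is an open map, so $\Exp(B)$ is open in $\mathbb{T}_\R$ and
$$\Exp(B)\cap\mathbb{T}_\R(S)=\Exp\bigl(B\cap\Exp^{-1}(\mathbb{T}_\R(S))\bigr)=\Exp\bigl(B\cap\mathcal{D}_0(S)\bigr)\subset H.$$
Thus $H$ contains a neighbourhood of $1$ in $\mathbb{T}_\R(S)$; being a subgroup it is therefore open, hence also closed, in the connected group $\mathbb{T}_\R(S)_1$, and non-empty, so $H=\mathbb{T}_\R(S)_1$, as claimed.

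I expect the only point requiring a little care to be the identity $\Exp^{-1}(\mathbb{T}_\R(S))=\psi^{-1}(\Z^{|S|})$ together with the discreteness of $\image(\psi)\cap\Z^{|S|}$: this is exactly what makes $\mathcal{D}_0(S)$ an open-and-closed connected component of the preimage (morally, its Lie algebra), and everything else is formal covering-space theory plus the standard fact that an open subgroup of a connected topological group is the whole group. As an alternative ending one could finish by path lifting: any $t\in\mathbb{T}_\R(S)_1$ is joined to $1$ by a path in $\mathbb{T}_\R(S)_1\subset\mathbb{T}_\R$, and its lift starting at $0$ stays inside the component $\mathcal{D}_0(S)$ of $\Exp^{-1}(\mathbb{T}_\R(S))$, ending at a preimage of $t$.
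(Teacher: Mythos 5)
Your proof is correct and follows the paper's approach: identify $\mathcal{D}_0(S)$ as the connected component of $\Exp^{-1}(\mathbb{T}_\R(S))$ through the origin. The paper's proof stops after this observation and treats the conclusion as evident; you complete the argument rigorously with the open-subgroup lemma (or, alternatively, path lifting), and in doing so you also quietly correct a small slip in the paper, whose displayed preimage is labelled $\Exp^{-1}(\mathbb{T}_\R(S)_1)$ when the set on the right-hand side is in fact all of $\Exp^{-1}(\mathbb{T}_\R(S))$.
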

\begin{proof} 
As we saw 
$$\Exp^{-1}(\T_\R(S)_1)=\{v\in V_\R~|~\alpha(v)\in\mathbb{Z}~\forall\alpha\in S\}$$ 
is a disjoint union of linear subspaces 
in $V_\R$. It is easily seen that one of the subspaces which passes through the origin 
$0\in V_\R$ is precisely equal to $\mathcal{D}_0(S)$. 
%It is clear that $\Exp(\mathcal{D}_\circ(S))\subset\mathbb{T}(S)_1$. On the other hand, $\Exp$ is surjective from any components and this conclude the proof.
\end{proof}

\begin{Lemma}\label{positivityfromequal} Consider $S\subset\Phi$. Then $\mathcal{D}_0(S)=\mathcal{D}(S)$ if and only if there exist $c_\alpha>0$ for all $\alpha\in S$ such that $\sum_{\alpha\in S}c_\alpha \alpha=0$. 
\end{Lemma}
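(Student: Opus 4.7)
The plan is to prove the two implications separately, with the $(\Leftarrow)$ direction being an immediate consequence of the definitions and the $(\Rightarrow)$ direction relying on the cone duality already recorded in (\ref{eq:dualcone}).

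For $(\Leftarrow)$, assume coefficients $c_\alpha > 0$ exist with $\sum_{\alpha \in S} c_\alpha \alpha = 0$, and pick any $v \in \mathcal{D}(S)$, so that $\alpha(v) \le 0$ for every $\alpha \in S$. Evaluating the relation at $v$ gives $0 = \sum_{\alpha \in S} c_\alpha \alpha(v)$, which is a sum of nonpositive terms with strictly positive weights; this forces $\alpha(v) = 0$ for each $\alpha$, i.e., $v \in \mathcal{D}_0(S)$. The reverse inclusion $\mathcal{D}_0(S) \subset \mathcal{D}(S)$ is automatic.

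For $(\Rightarrow)$, set $W := \mathcal{D}_0(S) = \mathcal{D}(S)$. Since $W$ is a linear subspace, its dual cone $\mathcal{D}(S)^\vee = \{\alpha \in V^*_\R : \alpha(v) \ge 0 \ \forall v \in W\}$ reduces to the annihilator $W^\perp$, which is again a linear subspace. By (\ref{eq:dualcone}), $\Cone(S) = -\mathcal{D}(S)^\vee = W^\perp$ is therefore a linear subspace of $V^*$, hence closed under negation. Consequently, for each $\alpha \in S$ the element $-\alpha$ lies in $\Cone(S)$, so we may write $-\alpha = \sum_{\beta \in S} d^{(\alpha)}_\beta \beta$ with $d^{(\alpha)}_\beta \ge 0$, giving a relation $\alpha + \sum_{\beta \in S} d^{(\alpha)}_\beta \beta = 0$ in which the coefficient of $\alpha$ itself is at least $1$.

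Summing these relations over $\alpha \in S$ produces $\sum_{\beta \in S} c_\beta \beta = 0$, where each $c_\beta$ collects a contribution of at least $1$ from the $\beta$-indexed relation plus nonnegative contributions from the others, so $c_\beta \ge 1 > 0$. The only potential obstacle is ensuring that \emph{every} coefficient in the final relation is strictly positive, and this is precisely what the self-contribution of each individual relation guarantees. An alternative route would bypass duality and invoke Motzkin's transposition theorem directly: the infeasibility of $\{\alpha(v) \le 0 \ \forall \alpha \in S,\ \alpha_0(v) < 0\}$ for each fixed $\alpha_0 \in S$ yields a nonnegative relation among the elements of $S$ with a strictly positive coefficient on $\alpha_0$, and summing these over $\alpha_0 \in S$ again produces the required globally positive relation.
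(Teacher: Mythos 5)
Your proposal is correct and follows essentially the same route as the paper: the $(\Leftarrow)$ direction by evaluating the positive relation on a point of $\mathcal{D}(S)$, and the $(\Rightarrow)$ direction by observing via \eqref{eq:dualcone} that $\Cone(S)$ is a linear subspace, writing each $-\alpha$ as a nonnegative combination of elements of $S$, and summing the resulting relations so that every coefficient picks up a self-contribution of at least $1$. Your explicit bookkeeping of the coefficient of $\alpha$ in its own relation is a slightly cleaner version of the paper's device of setting $c_{\alpha\alpha}:=1$.
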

\begin{proof} Let us suppose that for all $\alpha\in S$ there exist $c_\alpha>0$ such that $\sum_{\alpha\in S}c_\alpha \alpha=0$. It is clear that $\mathcal{D}_0(S)\subset\mathcal{D}(S)$, and we will prove the opposite inclusion. 
Consider $v\in\mathcal{D}(S)$, then $\alpha(v)\le0$ for all $\alpha\in S$. Then $0=\sum_{\alpha\in\ S}c_\alpha \alpha(v)$ and since all coefficients are $c_\alpha>0$, this implies that $\alpha(v)=0$ and so $v\in\mathcal{D}_0(S)$.

Conversely, suppose that $\mathcal{D}_0(S)=\mathcal{D}(S)$. Then $\mathcal{D}(S)\subset V_\R$ is a linear subspace. Hence, $\Cone(S)=-\mathcal{D}(S)^\vee$ is also a linear subspace of $V^*_\R$. This implies that $-\alpha\in\Cone(S)$ for all $\alpha\in S$. Then, $-\alpha$ can be written as a linear combination $-\alpha=\sum_{\alpha'\in S\setminus\{\alpha\}}c_{\alpha\alpha'}\alpha'$, where $c_{\alpha\alpha'}\ge0$. Let us define $c_{\alpha\alpha}:=1$. Then, for every $\alpha\in S$, we have a linear relation $\sum_{\alpha'\in S}c_{\alpha\alpha'}\alpha'=0$, with $c_{\alpha\alpha'}\ge0$ and $c_{\alpha\alpha}=1$. The sum $\sum_{\alpha\in S}\sum_{\alpha'\in S}c_{\alpha\alpha'}\alpha'=0$ is now the required relation.
\end{proof}

%Notice that with these new notations, Proposition \ref{admissopenconeintersect} can be rewritten as
%\begin{Proposition} Consider $t\in\mathbb{T}_\R$ and $v_0\in V_\R$ such that $\exp(v_0)=t$. Then $t$ is $\Phi$-admissible if and only if $(v_0+\Lambda)\cap\mathcal{D}(\Phi_t)\ne\emptyset$.
%\end{Proposition}

\begin{Theorem}
\label{thm:positivity} Let $t\in\T_\R$. 
Suppose that 
there exists $c_\alpha>0$ for all $\alpha\in \Phi_t$ such that $\sum_{\alpha\in \Phi_t}c_\alpha \alpha=0$. 
%$\mathcal{D}_\circ(\Phi_t)=\mathcal{D}(\Phi_t)$. 
Then $t$ is $\Phi$-admissible if and only if $t\in\mathbb{T}_\R(\Phi_t)_1$.
\end{Theorem}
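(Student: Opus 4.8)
The strategy is to run everything through the cone reformulation of admissibility (Proposition~\ref{admissopenconeintersect}) and to unlock the hypothesis by way of Lemma~\ref{positivityfromequal}. Concretely, the assumption that there are $c_\alpha>0$ with $\sum_{\alpha\in\Phi_t}c_\alpha\alpha=0$ is, by Lemma~\ref{positivityfromequal}, equivalent to the equality $\mathcal{D}_0(\Phi_t)=\mathcal{D}(\Phi_t)$; so under the hypothesis the cone $\mathcal{D}(\Phi_t)$ degenerates to a linear subspace, and this is what drives the proof.

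The ``if'' part needs no hypothesis at all: if $t\in\mathbb{T}_\R(\Phi_t)_1$ then $t\in\mathbb{T}(\Phi_t)_1$ and Proposition~\ref{prop:T1} already gives $\Phi$-admissibility. (Alternatively, and without leaving the real torus: by Lemma~\ref{lemmaexpt1com}, $t=\Exp(v_0)$ for some $v_0\in\mathcal{D}_0(\Phi_t)\subseteq\mathcal{D}(\Phi_t)$, so $(v_0+\Lambda)\cap\mathcal{D}(\Phi_t)\ne\emptyset$ and Proposition~\ref{admissopenconeintersect} applies.)

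For the ``only if'' part, assume $t$ is $\Phi$-admissible and pick $v_0\in V_\R$ with $\Exp(v_0)=t$, using the reduction to the real torus recorded just before Proposition~\ref{admissopenconeintersect}. That proposition then produces $v\in(v_0+\Lambda)\cap\mathcal{D}(\Phi_t)$, and $v\in V_\R$ since $\Lambda\subset V_\R$. By the hypothesis together with Lemma~\ref{positivityfromequal} we have $\mathcal{D}(\Phi_t)=\mathcal{D}_0(\Phi_t)$, hence $v\in\mathcal{D}_0(\Phi_t)$, i.e.\ $\alpha(v)=0$ for every $\alpha\in\Phi_t$; therefore, by Lemma~\ref{lemmaexpt1com}, $t=\Exp(v)\in\Exp(\mathcal{D}_0(\Phi_t))=\mathbb{T}_\R(\Phi_t)_1$, as desired.

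There is no serious obstacle here: the only step that requires a moment's care is that the witness $v$ for admissibility may be taken in $V_\R$, which is precisely the content of the reduction to $\mathbb{T}_\R$ established before Proposition~\ref{admissopenconeintersect}. The genuine content of the theorem is concentrated in Lemma~\ref{positivityfromequal}: a strictly positive linear dependence among the forms in $\Phi_t$ forces $\mathcal{D}(\Phi_t)$ to be a linear subspace, and it is exactly this rigidity that turns the ``lattice point in a cone'' criterion for admissibility into membership of $t$ in the identity component $\mathbb{T}_\R(\Phi_t)_1$.
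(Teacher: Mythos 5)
Your proof is correct and follows essentially the same route as the paper: reduce admissibility to the lattice-point-in-cone criterion of Proposition~\ref{admissopenconeintersect}, apply Lemma~\ref{positivityfromequal} to collapse $\mathcal{D}(\Phi_t)$ to the linear subspace $\mathcal{D}_0(\Phi_t)$, and then invoke Lemma~\ref{lemmaexpt1com} to identify $\Exp(\mathcal{D}_0(\Phi_t))$ with $\mathbb{T}_\R(\Phi_t)_1$. You spell out the two directions more explicitly than the paper's terse chain of equivalences, but the underlying argument and key lemmas are identical.
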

%\begin{Proposition}
%\end{Proposition}
\begin{proof} 
Recall that $t$ is $\Phi$-admissible if and only if there exists $v\in\mathcal{D}(\Phi_t)$ such that $\Exp(v)=t$. By Lemma \ref{positivityfromequal}, we have $\mathcal{D}_0(\Phi_t)=\mathcal{D}(\Phi_t)$. 
Hence $t$ is $\Phi$-admissible if and only if 
there exists $v\in\mathcal{D}_0(\Phi_t)$ such that $\Exp(v)=t$. 
By Lemma \ref{lemmaexpt1com}, it is equivalent to $t\in\mathbb{T}_\R(\Phi_t)_1$.
\end{proof}

\section{Structure of admissible rank one local systems}
\label{sec:structure}

Let $\mathcal{A}=\{L_0,\dots, L_n\}$ be a line arrangement in $\PPP^2$ such 
that $L_i=\{f_i=0\}$, where $f_i$ is a defining linear form. 
Set $M:=M(\mathcal{A})=\PPP^2\setminus (L_0\cup\dots\cup L_n)$. 
A cohomology class $\alpha\in H^1(M,\C)$ is given by 
\begin{equation} 
\alpha=\sum_{j=0}^na_j\frac{df_j}{f_j}, 
\end{equation}
with $a_j\in\C$ and $\sum_{j=0}^na_j=0$. The flat connection 
$\nabla:=d-\alpha\wedge$ determines a local system $\LL$. 
Since the local system $\LL$ is depending only on the 
the monodromies $t=(t_0, \dots, t_n)\in(\C^*)^n$ where $t=\Exp(\alpha):=
(\Exp(a_0), \dots, \Exp(a_n))$, it is denoted by $\LL_t$. 

%Let $\mathbb{T}(M):=\Hom(\pi_1(M),\C^*)$ be the character group of $M$. This is an algebraic torus $\mathbb{T}(M)\cong(\C^*)^n$. Consider the exponential mapping 
%\begin{equation}\label{defexpset}\Exp\colon H^1(M,\C)\to H^1(M,\C^*)=\mathbb{T}(M)
%\end{equation}
%induced by the usual exponential function $\C\to\C^*$, $t\mapsto\Exp(2\pi it)$, where $i=\sqrt {-1}$. Clearly $\Exp(H^1(M,\C))=\mathbb{T}(M)$ and $\Exp(H^1(M,\mathbb{Z}))=\{1\}$.

%A rank one local system $\mathcal{L}_\lambda$ is determined by a map $H_1(M,\mathbb{Z})\to\C^*$, which is an element $\lambda\in\mathbb{T}(M)$ that corresponds to the choice of some monodromy complex numbers $\lambda_j\in\C^*$ for $0\le j\le n$ such that $\lambda_0\cdots\lambda_n=1$. 
%A cohomology class $\alpha\in H^1(M,\C)$ is given by 
%\begin{equation} \alpha=\sum_{j=0}^na_j\frac{df_j}{f_j}
%\end{equation}
%with the residues $a_j\in\C$ satisfying $\sum_{j=0}^na_j=0$ and $f_j=0$ a linear equation for the line $L_j$. With this notation one has $\Exp(\alpha)=\lambda$ if and only if $\lambda_j=\Exp(2\pi ia_j)$ for any $j=0,\dots, n$.

For any $p\in\PPP^2$, we denote $\mathcal{A}_p:=\{L\in\mathcal{A}~|~p\in L\}$.

\begin{Definition}\label{defadlocsys} A local system $\mathcal{L}_t$ as above is \emph{admissible} if there is a cohomology class $\alpha\in H^1(M,\C)$ such that $\Exp(\alpha)=t$, $a_j\notin\mathbb{Z}_{>0}$ and, for any point $p\in L_0\cup\dots\cup L_n$ of multiplicity at least $3$, one has
$$a(p):=\sum_{\{j\in\{0,\dots,n\}~|~L_j\in\mathcal{A}_p\}}a_j\notin\mathbb{Z}_{>0}.$$
\end{Definition}
For an admissible local system it is proved that 
(\cite{esnault1992cohomology, schechtman1994local}) 
\begin{equation}
\label{isolike1admiss}
H^*(M(\A), \LL_t)\simeq
H^*(H^\bullet(M(\A), \C),\alpha\wedge). 
\end{equation}

%the isomorphisms in \eqref{isolike1admiss} were proved in 

%\begin{Remark}\label{simpleriformulation} From Definition \ref{defadlocsys}, we see that the admissible condition bases on the real part of the residues $a_i$, $i=0,\dots,n$. So instead of those complex residues, we may consider their real parts. It means we can assume that all residues $a_i$ are real numbers. Without loss of generality, we may also assume that $a_i\in[0,1)$ for all $i=1,\dots,n$ and $a_0=-\sum_{i=1}^na_i$. 
%\end{Remark}

Notice that Definition \ref{defadlocsys} is a particular case of the Definition \ref{defphiadmiss}. To see that, it is enough to put 
$$V=H^1(M,\C)=\{(x_0,\dots,x_n)\in\C^{n+1}~|~\sum_{i=0}^nx_i=0 \},$$ 
$$\mathbb{T}(\A)=\Hom(H_1(M(\A), \Z),\C^*)=\{(t_0, \dots, t_n)\in
(\C^*)^{n+1}\mid t_0t_1\cdots t_n=1\},$$ and 
$$
\Phi=\{x_0,\dots,x_n\}\cup\{x(p)\mid p\in\mathbb{P}^2, 
\sharp\A_p\geq 3\},$$ 
where $x_i\colon V\to\C$ is a linear form defined 
by $(x_0,\dots,x_n)\mapsto x_i$ and $x(p)\colon V\to\C$ is defined by $(x_0,\dots,x_n)\mapsto x_{i_1}+\cdots +x_{i_r}$, where $\mathcal{A}_p=\{L_{i_1},\dots ,L_{i_r}\}$.

Now we can apply the results in the previous section. 
For example, we have the following. 

\begin{Theorem}\label{admissisopen} The set of all non-admissible rank one local systems is the union of subtori of $\mathbb{T}(\A)$. Moreover, the set of all admissible rank one local systems forms a Zariski open subset of the character torus.
\end{Theorem}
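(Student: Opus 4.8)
The plan is to deduce Theorem \ref{admissisopen} directly from the general theory of \S\ref{sec:general} once we observe that Definition \ref{defadlocsys} is exactly the $\Phi$-admissibility of Definition \ref{defphiadmiss} for the specific $V$, $\T(\A)$ and $\Phi$ spelled out after the definition. First I would check that the data $(V,\T(\A),\Phi)$ genuinely fits the axioms of \S\ref{sec:general}: that $V=H^1(M,\C)$ is a complex vector space of dimension $n$, that $\T(\A)$ is an algebraic torus of the same dimension, that the exponential map $\Exp\colon V\to\T(\A)$ induced by $\alpha\mapsto\Exp(\alpha)$ has kernel a full lattice $\Lambda=H^1(M,\Z)\cap(\text{integer points})$, and — crucially — that every element of $\Phi$ is a linear form $V\to\C$ taking integer values on $\Lambda$. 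For the coordinate forms $x_i$ this is immediate, and for the forms $x(p)=\sum_{L_j\in\A_p}x_j$ it is a finite sum of such forms, hence also integral. One also notes $\Phi$ is finite because a line arrangement in $\PPP^2$ has only finitely many points of multiplicity $\geq 3$.

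Once this identification is in place, the theorem is a restatement of Theorem \ref{writingofadmisnonadmiss} together with Remark \ref{writingofadmisnonadmissremrk}: the set $\NonAdm(\Phi)$ is a finite union of the subtori $\T(S)_u$ with $S\subset\Phi$ and $u\in C(S)$, hence a Zariski closed subset of $\T(\A)$, and therefore its complement $\Adm(\Phi)$ — which is precisely the set of admissible rank one local systems by the identification above — is Zariski open. So the proof is essentially: (i) verify the dictionary between Definition \ref{defadlocsys} and Definition \ref{defphiadmiss}; (ii) invoke Theorem \ref{writingofadmisnonadmiss} and Remark \ref{writingofadmisnonadmissremrk}.

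The one genuinely substantive point — and the step I expect to be the main obstacle — is step (i), the verification that $\LL_t$ is admissible in the sense of Definition \ref{defadlocsys} if and only if $t$ is $\Phi$-admissible in the sense of Definition \ref{defphiadmiss}. Unwinding the definitions, $t\in\T(\A)$ is $\Phi$-admissible iff there is $v\in V=H^1(M,\C)$ with $\Exp(v)=t$ and $\alpha_i(v)\notin\Z_{>0}$ for every $\alpha_i\in\Phi$; writing $v=\sum_j a_j\,df_j/f_j$ with $\sum a_j=0$, the conditions $x_i(v)\notin\Z_{>0}$ say $a_i\notin\Z_{>0}$, and the conditions $x(p)(v)\notin\Z_{>0}$ say $a(p)=\sum_{L_j\in\A_p}a_j\notin\Z_{>0}$ for every $p$ of multiplicity $\geq 3$ — which is verbatim Definition \ref{defadlocsys}. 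Here one should be slightly careful that points of multiplicity exactly $2$ impose no condition (consistent with Definition \ref{defadlocsys}, which only ranges over multiplicity $\geq 3$), and that the requirement $\sum_j a_j=0$ is automatically encoded in the constraint defining $V$, so no discrepancy arises. There is nothing deep here, only bookkeeping, but it is the place where an error could slip in.

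Finally, for the ``Moreover'' clause I would simply note that $\Adm(\Phi)=\T(\A)\setminus\NonAdm(\Phi)$ and that the complement of a finite union of subtori (each of which is an irreducible Zariski closed subset, being the translate of a subtorus cut out by characters $\tilde{\alpha}_i=1$) is Zariski open; this needs no further argument beyond Remark \ref{writingofadmisnonadmissremrk}. One could optionally remark that, by the reduction to the real torus discussed after Corollary \ref{connect1givadmis}, it suffices to test $\Phi$-admissibility on $\T_\R(\A)$, but this is not needed for the statement of Theorem \ref{admissisopen}.
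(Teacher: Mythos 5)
Your proposal is correct and follows exactly the route the paper intends: the paper itself states ``Now we can apply the results in the previous section'' after spelling out the identification of $(V,\T(\A),\Phi)$ with the data of \S\ref{sec:general}, and gives no further proof, so Theorem \ref{admissisopen} is meant to be read as an immediate corollary of Theorem \ref{writingofadmisnonadmiss} and Remark \ref{writingofadmisnonadmissremrk} via the dictionary between Definitions \ref{defadlocsys} and \ref{defphiadmiss}. You have merely made explicit the bookkeeping (finiteness of $\Phi$, integrality of the forms $x_i$ and $x(p)$ on $\Lambda$, and the verbatim match of the non-positivity conditions) that the paper leaves to the reader.
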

We denotes the set of admissible (resp. non-admissible) local systems by 
$\Adm(\A)$ (resp. $\NonAdm(\A)$). 
%\begin{proof} Consider a local system $\mathcal{L}_\lambda$ corresponding to $\lambda:=(\lambda_0, \dots, \lambda_n)\in\mathbb{T}(M)$ and $\alpha:=(a_0,\dots,a_n)\in H^1(M,\C)$ satisfying $\sum_{j=0}^na_j=0$ such that $\Exp(\alpha)=\lambda$. Consider $I:=\{i\in\{0,\dots,n\}~|~\lambda_i=1\}$ and $P:=\{p\in\PPP^2~|~\lambda(p)=1\}$. Then we define $\mathbb{T}(I,P):=\{\lambda\in\mathbb{T}(M)~|~\lambda_i=1~\forall i\in I\text{ and }\lambda(p)=1~\forall p\in P\}$ and $\mathbb{T}(I,P)^\circ:=\{\lambda\in\mathbb{T}(M)~|~\lambda_i=1~\forall i\in I, \lambda_j\ne1~\forall j\notin I, \lambda(p)=1~\forall p\in P\text{ and }\lambda(q)\ne1~\forall q\notin P\}$. Notice that the $\mathbb{T}(I,P)=\mathbb{T}(S)$ and $\mathbb{T}(I,P)^\circ=\mathbb{T}(S)^\circ$, where $S\subset\Phi$ is defined by $S:=\{x_i~|~i\in I\}\cup\{x(p)~|~p\in P\}$.

%The first part of the statement is now just a consequence of Theorem \ref{writingofadmisnonadmiss} and Remark \ref{writingofadmisnonadmissremrk}.
%%proven just by noticing that the set of non-admissible local system is the union of all the $\overline{\mathbb{T}(I,P)^\circ_S}$ that contains a non-admissible local system.

%The second part is a direct consequence of the first part.
%\end{proof}

\section{Characteristic varieties and admissible local systems}
\label{sec:characteristic}

Given $\mathcal{A}=\{L_0,\dots,L_n\}\subset\PPP^2$ a line arrangement, its characteristic varieties are the jumping loci for cohomology with coefficients in $\C$-valued rank one local systems on $M$ defined by 
$$\mathcal{V}^p_k(\mathcal{A}):=\{t\in\mathbb{T}(\A)~|~\dim H^k(M,\LL_t)\ge p\},$$
 for all $k, p\ge0$. For every $k$, we have a descending filtration
 $$\mathcal{V}^0_k(\mathcal{A})\supset \mathcal{V}^1_k(\mathcal{A}) \supset\cdots\supset\mathcal{V}^p_k(\mathcal{A})\supset\cdots .$$
 We denote $\mathcal{V}^1_1(\mathcal{A})$ simply by $\mathcal{V}_1(\mathcal{A})$. For more details, see for example \cite{Suciu:2013fk}. 
 
 In the past few years a lot of work has been done in order to understand the structure of $\mathcal{V}_1(\mathcal{A})$. In \cite{arapura1997geometry}, Arapura prove that the characteristic variety is the union of (possibly torsion-translated) subtori of $(\C^*)^n$. In \cite{libgober2000cohomology}, Libgober and Yuzvinsky, prove that the non-translated components of the characteristic variety are determined combinatorially. In the following two Theorems,  we describe the relation between admissible local systems and the points of  $\mathcal{V}_1(\mathcal{A})$.

\begin{Theorem} Let $\mathcal{A}\subset\PPP^2$ be a line arrangement. Any rank one local system belonging to a local component of $\mathcal{V}_1(\mathcal{A})$ is admissible.
\end{Theorem}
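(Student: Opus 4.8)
The plan is to reduce the statement to the connectedness criterion established in Section~\ref{sec:general}. Recall that a \emph{local component} of $\mathcal{V}_1(\mathcal{A})$ is one of the subtori
$$
\mathbb{T}_p:=\{t=(t_0,\dots,t_n)\in\mathbb{T}(\mathcal{A})\mid t_j=1\text{ whenever }L_j\notin\mathcal{A}_p\},
$$
attached to a point $p\in L_0\cup\dots\cup L_n$ of multiplicity $|\mathcal{A}_p|\ge 3$. So I would fix such a $p$ and a point $t\in\mathbb{T}_p$. Since Definition~\ref{defadlocsys} is the instance of Definition~\ref{defphiadmiss} for the set $\Phi=\{x_0,\dots,x_n\}\cup\{x(q)\}$ described above, it suffices to prove that $t$ is $\Phi$-admissible, and by the Corollary following Proposition~\ref{prop:T1} it is enough to show that the subtorus $\mathbb{T}(\Phi_t)$ is connected.

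The key point is that, for $t\in\mathbb{T}_p$, the group $\mathbb{T}(\Phi_t)$ is a coordinate subtorus. Put $J_0:=\{j\mid L_j\in\mathcal{A}_p,\ t_j=1\}$, $K:=\{j\mid L_j\notin\mathcal{A}_p\}\cup J_0$, and
$$
\mathbb{T}':=\{s\in\mathbb{T}(\mathcal{A})\mid s_j=1\ \forall j\in K\}.
$$
Being cut out of $\mathbb{T}(\mathcal{A})=\{\prod_i s_i=1\}$ by setting a set of coordinates equal to $1$, $\mathbb{T}'$ is connected. I would then prove $\mathbb{T}(\Phi_t)=\mathbb{T}'$. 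The inclusion ``$\subseteq$'' is immediate: $x_j\in\Phi_t$ for every $j\in K$ (since $\tilde{x}_j(t)=t_j=1$, either because $t\in\mathbb{T}_p$ or because $j\in J_0$), so $\mathbb{T}(\Phi_t)\subseteq\bigcap_{j\in K}\ker\tilde{x}_j=\mathbb{T}'$. For ``$\supseteq$'' I would take $s\in\mathbb{T}'$ and check $\tilde\alpha(s)=1$ for each $\alpha\in\Phi_t$. If $\alpha=x_j$ this is clear, as then $j\in K$. If $\alpha=x(q)$, I use that two distinct points of $\PPP^2$ lie on a unique line, so $\mathcal{A}_q\cap\mathcal{A}_p$ is either empty or a single line $\{L_{j_0}\}$, while all other lines of $\mathcal{A}_q$ have index in $K$; hence $\tilde\alpha(s)$ collapses to at most the factor $s_{j_0}$. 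For $q=p$ that product is in fact $\prod_i s_i=1$; for $q\ne p$ with $\mathcal{A}_q\cap\mathcal{A}_p=\{L_{j_0}\}$ the hypothesis $x(q)\in\Phi_t$ forces $t_{j_0}=1$, i.e. $j_0\in J_0\subseteq K$, whence $s_{j_0}=1$. Thus $\mathbb{T}'\subseteq\mathbb{T}(\Phi_t)$, so $\mathbb{T}(\Phi_t)=\mathbb{T}'$ is connected and $t$ is $\Phi$-admissible.

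I expect the only step needing real care to be this reduction of the multiple-point characters $x(q)$ to at most a single coordinate on $\mathbb{T}_p$; everything else is bookkeeping with the index set $K$, and the reduction itself rests on the elementary incidence fact that two distinct points of $\PPP^2$ determine at most one common line. As an alternative to the connectedness argument, one could also construct an admissible representative directly: since $\prod_{L_j\in\mathcal{A}_p}t_j=1$, there is $v=(a_0,\dots,a_n)$ with $\Exp(v)=t$ and $a_j=0$ for $L_j\notin\mathcal{A}_p$; then the finitely many conditions of Definition~\ref{defadlocsys} only restrict the $a_j$ with $L_j\in\mathcal{A}_p$, and one can make $a_j\le 0$ for those $j$ with $t_j=1$ by moving the excess onto a coordinate with $t_j\neq 1$ (there are at least two such unless $t=\mathbf 1$).
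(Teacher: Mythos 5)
Your proof is correct, and it takes a genuinely different route from the paper. The paper proves the theorem by a direct construction: after normalizing $\mathcal{A}_p=\{L_0,\dots,L_r\}$, it exhibits an admissible representative $\alpha=(a_0,\dots,a_r,0,\dots,0)$ with $\sum_{i=0}^r a_i=0$ and $a_i=0$ whenever $t_i=1$; the verification that all the residues $a_j$ and all the $a(q)$ avoid $\mathbb{Z}_{>0}$ is then left implicit, resting on the same incidence fact you isolate (for $q\neq p$ the sets $\mathcal{A}_q$ and $\mathcal{A}_p$ share at most one line, so $a(q)$ collapses to a single $a_{j_0}$ or to $0$). You instead appeal to the abstract machinery of Section~\ref{sec:general}: you show that $\mathbb{T}(\Phi_t)$ is the coordinate subtorus $\mathbb{T}'$, hence connected, and then invoke the corollary to Proposition~\ref{prop:T1}. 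Your identification of $\mathbb{T}(\Phi_t)$ with $\mathbb{T}'$ is correct --- the only nontrivial inclusion reduces, as you say, to the observation that each multiple-point character $x(q)$ in $\Phi_t$ restricts on $\mathbb{T}'$ to at most one coordinate $s_{j_0}$, and that coordinate is forced to $1$ because $x(q)\in\Phi_t$ forces $t_{j_0}=1$. Your approach buys a cleaner logical structure (no need to argue about which residues can simultaneously be set to zero while preserving the relation $\sum a_i=0$), at the cost of first computing $\mathbb{T}(\Phi_t)$ explicitly; the paper's approach is shorter but more compressed, and the direct alternative you sketch at the end is essentially the paper's argument made explicit, including the care about redistributing integer shifts among the at least two coordinates with $t_j\neq 1$.
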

\begin{proof} Recall that any local component $C$ of $\mathcal{V}_1(\mathcal{A})$ corresponds to a multiple point $p\in\PPP^2$ of $\mathcal{A}$. Without loss of generality, we can suppose $\mathcal{A}_p=\{L_0,\dots, L_r\}$, with $r\ge2$. Then we can describe $C$ as $C=C_p:=\{t\in\mathbb{T}(M)~|~\prod_{i=0}^r t_i=1\text{ and }t_j=1~\forall j\in\{r+1,\dots,n\}\}$. 

%By Remark \ref{simpleriformulation}, 
It is enough to choose $\alpha\in H^1(M,\C)$ such that $\alpha=(a_0,\dots,a_r,0\dots,0)$, with $\Exp(a_i)=t_i$, $\sum_{i=0}^ra_i=0$ and $a_i=0$ if $a_i\in\Z$ for all $i=0,\dots, r$. %Moreover, we know that $a_0=-\sum_{i=1}^na_i<0$ and so $\sum_{i=0}^ra_i=0$. This proves that any local system in $C$ is admissible. 
%Given $\alpha\in H^1(M,\C)$ such that $\exp(\alpha)=\lambda$, we can always find $N\in\mathbb{Z}^{n+1}$ with $\sum_{i=0}^nN_i=0$ such that $\sum_{i=0}^n(a_i+N_i)=0$ and $\sum_{j=k_1}^{k_r}(a_j+N_j)=0$. This prove that any such $\mathcal{L}$ is admissible because $\exp(\alpha+N)=\exp(\alpha)=\mathcal{L}$.
\end{proof}
There exists a similar notion to the one of admissibility for local systems and it is the one of $1$-admissibility.
\begin{Definition} A local system $\mathcal{L}_t$  is \emph{$1$-admissible} if there is a cohomology class $\alpha\in H^1(M,\C)$ such that $\Exp(\alpha)=t$ and 
$$\dim H^1(M,\mathcal{L}_t)=\dim H^1(H^\bullet(M,\C),\alpha\wedge).$$
\end{Definition}
It is clear from the isomorphisms \eqref{isolike1admiss}  that any admissible local system is $1$-admissible.

Now let us discuss non-admissibility of translated components. 
\begin{Theorem}\label{translnonadmiss} Let $\mathcal{A}=\{L_1,\dots,L_n\}\subset\PPP^2$ be a line arrangement. Any rank one local system belonging to a translated component of $\mathcal{V}_1(\mathcal{A})$ is non-admissible.
\end{Theorem}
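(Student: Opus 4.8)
The plan is to combine the characterization of admissibility via lattice points in cones (Proposition~\ref{admissopenconeintersect}) with the structure theory of translated components due to Arapura and the description of such components for line arrangements. Let $C$ be a translated component of $\mathcal{V}_1(\mathcal{A})$, and let $t$ be a generic point of $C$. The first step is to understand $\Phi_t$, i.e. which of the linear forms $x_0,\dots,x_n$ and $x(p)$ (for multiple points $p$) vanish on a lift $v$ of $t$. A translated component arises, by work of Arapura (and the refinement by Artal Bartolo, Cogolludo, Libgober, etc.), from a pencil of curves on $M$ that is not a pencil of lines through a single point; equivalently, $C$ is a translate $\rho\cdot T$ of a subtorus $T$ where $T$ itself is a ``combinatorial'' component coming from a multinet/pencil, but the translation character $\rho$ is non-trivial. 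The key structural input is that for a point $t\in C$, the set of ``active'' constraints $\Phi_t$ contains a sub-collection of forms which satisfy a positive linear dependence: the pencil structure forces a relation $\sum_{\alpha\in\Phi_t} c_\alpha \alpha = 0$ with all $c_\alpha>0$ (this is exactly the kind of relation coming from the fact that the fibers of a pencil sum up, in homology, to a fixed class — the multinet relation).

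Granting such a positive relation on $\Phi_t$, I would invoke Theorem~\ref{thm:positivity}: $t$ is $\Phi$-admissible if and only if $t\in\mathbb{T}_\R(\Phi_t)_1$, the identity component. So the proof reduces to showing that a generic point $t$ of a \emph{translated} component does \emph{not} lie in $\mathbb{T}_\R(\Phi_t)_1$ — intuitively, the whole point of ``translated'' is that $C$ sits in a non-identity component of the relevant subtorus $\mathbb{T}(\Phi_t)$. More precisely, one shows $\mathbb{T}(\Phi_t)_1$ coincides (or is contained in) the non-translated subtorus $T$ passing through $1$, whereas $C\subset \rho\cdot T$ with $\rho\notin T$; hence $C\cap \mathbb{T}(\Phi_t)_1=\emptyset$, and by Theorem~\ref{thm:positivity} no point of $C$ is admissible. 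One must also handle the possibility that $\Phi_t$ is larger for non-generic $t\in C$, but Corollary~\ref{nonadmissispreserved1} lets us propagate non-admissibility from the generic point to the closure, so it suffices to treat a generic $t$, or rather to treat each stratum $\mathbb{T}(S)^\circ_u$ meeting $C$ separately and show non-admissibility on the open dense one.

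In more concrete terms for line arrangements, after a combinatorial change I expect the relevant forms to be the $x(p)$ for the multiple points $p$ lying on the members of the pencil, together with possibly some $x_j$; the positive relation $\sum c_\alpha\alpha=0$ expresses that summing the residues over a fiber of the pencil gives the (trivial) total residue, and the genericity of the translation character $\rho$ guarantees that a lift $v$ of $t$ cannot be chosen with all these $\alpha(v)=0$ simultaneously while also being integral — equivalently $t\notin\mathbb{T}_\R(\Phi_t)_1$. So the steps are: (1) recall the pencil/multinet description of a translated component $C$ and identify $\Phi_t$ for generic $t\in C$; (2) exhibit the positive linear relation among the forms in $\Phi_t$; (3) apply Theorem~\ref{thm:positivity} to reduce admissibility of $t$ to membership in $\mathbb{T}_\R(\Phi_t)_1$; (4) use the defining ``translated'' property of $C$ to show $C$ misses $\mathbb{T}_\R(\Phi_t)_1$; (5) spread non-admissibility to all of $C$ via Corollary~\ref{nonadmissispreserved1}.

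The main obstacle I anticipate is step (2): pinning down precisely which linear forms vanish on a lift of a generic point of a translated component, and verifying that they admit a strictly positive linear relation. This requires a careful use of the pencil structure (the fact that a translated component is supported on a pencil whose generic fiber is connected, and the resulting relation among the restricted $1$-forms), and possibly a case analysis depending on whether the pencil has members that are single lines of $\mathcal{A}$ versus reducible curves. The cleanest route may be to quote the explicit normal form for translated components in the literature (e.g. from the Hirzebruch-type or deleted $B_3$ examples treated in \S\ref{sec:examples}, generalized), and to check the positive-relation hypothesis of Theorem~\ref{thm:positivity} directly from that normal form; once that hypothesis is in hand, the rest is immediate from the machinery of \S\ref{sec:general}.
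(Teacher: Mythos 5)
Your proposal goes by a genuinely different route than the paper, but it has two substantial gaps that together mean it is not yet a proof.

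The paper's own argument is cohomological and short. It picks $t$ in the translated component $C$ but outside all other components, supposes admissibility with witness $\alpha$, and uses the ray $t_s=\Exp(s\alpha)$ joining $1$ to $t$. For small $s>0$, $t_s\notin\mathcal V_1(\A)$ (so $H^1(M,\LL_{t_s})=0$) and $t_s$ is admissible (openness of $\Adm$), hence $H^1(A^\bullet,(s\alpha)\wedge)=0$. By Libgober--Yuzvinsky (\cite{libgober2000cohomology}, Lemma 4.1) the Aomoto cohomology $H^1(A^\bullet,(s\alpha)\wedge)$ is independent of $s\neq 0$, so it also vanishes at $s=1$, contradicting $\dim H^1(A^\bullet,\alpha\wedge)=\dim H^1(M,\LL_t)\geq 1$. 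The argument then spreads to all of $C$ by irreducibility and closedness of $\NonAdm$. This entirely avoids the combinatorics of $\Phi_t$.

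Your route, via Theorem \ref{thm:positivity}, has the following gaps. First, Theorem \ref{thm:positivity} requires a strictly positive dependence $\sum_{\alpha\in\Phi_t}c_\alpha\alpha=0$ over \emph{all} of $\Phi_t$, not merely over a sub-collection as you say; you neither establish which forms lie in $\Phi_t$ for a translated component nor show that such a relation over the full set always exists — you explicitly flag this (your step (2)) as an anticipated obstacle, and it remains unproven. The paper's examples (non-Fano, deleted $B_3$, non-Pappus) check this relation \emph{ad hoc} in each case, and the fact that such positive relations hold there is what makes those examples tractable, but nothing in the paper asserts it for an arbitrary translated component. Second, step (4) — that $C$ is disjoint from $\mathbb{T}_\R(\Phi_t)_1$ because $C$ is translated — does not follow just from $1\notin C$: a translated component can in principle meet the identity component of $\mathbb{T}(\Phi_t)$ away from $1$, and you give no argument ruling this out. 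Until both of these are supplied, your sketch does not establish the theorem; the paper sidesteps them entirely with the Aomoto-homogeneity argument.
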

\begin{proof} 
Let $C\subset\mathcal{V}_1(\mathcal{A})$ be a translated component, i.e., 
an irreducible component which does not contain $1$. 
Let $t\in C\setminus\{\mbox{other components}\}$ be a local system that belongs only to the translated component $C\subset\mathcal{V}_1(\mathcal{A})$. 
Let us assume now that it is admissible. Then there exists $\alpha=(a_0,\dots,a_n)\in H^1(M,\C)$ satisfying $\sum_{j=0}^na_j=0$ such that $\Exp(\alpha)=t$ and $H^1(M,\LL_t)=H^1(M,\mathcal{L}(\alpha))\cong H^1(A^\bullet,\alpha\wedge)$, where $A^\bullet=H^\bullet(M,\C)$. Notice that since $t\in\mathcal{V}_1(\mathcal{A})$, then $\dim H^1(A^\bullet,\alpha\wedge)\ge 1$.

Let us consider now $t_s=\Exp(s \alpha)$ for $s\in[0,1]$, then $t_0=1$ and $t_1=t$. Moreover, $t_s\notin\mathcal{V}_1(\mathcal{A})$ for all except finitely many $s$ and since admissibility is an open property, by Theorem \ref{admissisopen}, $t_s$ is admissible if $0<s\ll 1$.

Let us now fix $0<s_1\ll 1$ such that $t_{s_1}\notin\mathcal{V}_1(\mathcal{A})$. This implies that $H^1(M,\LL_{t_{s_1}})=0$ and $t_{s_1}$ is admissible. Hence 
$$0=H^1(M,\LL_{t_{s_1}})=H^1(M,\mathcal{L}(s_1\alpha)).$$ 
%$$\cong H^1(M,\mathcal{L}(t_1\alpha+N))\cong H^1(A^\bullet,(t_1\alpha+N)\wedge).$$
By \cite{libgober2000cohomology} Proposition 4.2, we have the following $$0=\dim H^1(M,\mathcal{L}(s_1\alpha))\ge\dim H^1(A^\bullet,(s_1\alpha)\wedge).$$ Now by \cite{libgober2000cohomology} Lemma 4.1, we have that $$H^1(A^\bullet,(s_1\alpha)\wedge)\cong H^1(A^\bullet,\alpha\wedge).$$ But this implies that $0=\dim H^1(A^\bullet,\alpha\wedge)$ and this is impossible. Hence $\LL_t$ is non-admissible. 

Since $C$ is irreducible, the Zariski open subset 
$C\setminus\{\mbox{other components}\}$ is dense. By 
Theorem \ref{admissisopen}, $\LL_t$ is non-admissible for any $t\in C$. 
%this conclude the proof also in the case that $\lambda$ belongs also to a non-translated component of $\mathcal{V}_1(\mathcal{A})$.
%If $\mathcal{L}$ belongs also to a non-translated component of $\mathcal{V}_1(\mathcal{A})$, because it is an isolated point by \cite{nazir2009intersection} and \cite{suciu2012intersections}, it can not be admissible by Theorem \ref{admissisopen}.
\end{proof}

\begin{Remark}
Notice that the previous proof proves also that such local systems are non $1$-admissible.
\end{Remark}

\begin{Lemma}\label{pathofintersectinchar} 
Let $X_1,X_2\subset\mathcal{V}_1(\mathcal{A})$ be two components of $\mathcal{V}_1(\mathcal{A})$ and define $k_i:=\max\{k\in\mathbb{Z}~|~X_i\subset\mathcal{V}_1^k(\mathcal{A})\}$, where $i=1,2$. Suppose there exists $t=\Exp(\alpha)\ne1$ such that $t\in X_1\cap X_2$ and define $l:=\max\{k\in\mathbb{Z}~|~t\in\mathcal{V}_1^k(\mathcal{A})\}$. Consider the path $t_s=\Exp(s \alpha)$, where $s\in[0,1]$. Then $t_s\notin\mathcal{V}_1^l(\mathcal{A})$ for almost all $s\in[0,1]$.
\end{Lemma}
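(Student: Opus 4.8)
The plan is to compare the dimension of the Aomoto cohomology $H^1(A^\bullet, (s\alpha)\wedge)$ with the dimension of the local system cohomology $H^1(M, \mathcal{L}(s\alpha))$ along the path $t_s = \Exp(s\alpha)$, and to use the fact that generic points of the path are admissible. First I would recall from \cite{libgober2000cohomology} (Lemma 4.1 and the homogeneity of the Aomoto complex) that the rank of the differential $(s\alpha)\wedge\colon A^1 \to A^2$ (and likewise $A^0 \to A^1$) is independent of the scalar $s\ne 0$, so that $\dim H^1(A^\bullet, (s\alpha)\wedge)$ takes a constant value, say $d$, for all $s\in(0,1]$; in particular, taking $s=1$, since $t\in X_1\cap X_2$ lies in $\mathcal{V}_1^l(\A)$ we get $d \ge l$ (using that $\dim H^1(M,\mathcal{L}_t)\ge l$ together with \cite{libgober2000cohomology} Proposition 4.2, which gives $\dim H^1(M,\mathcal{L}(\alpha)) \le \dim H^1(A^\bullet, \alpha\wedge)$ only in the wrong direction — see the obstacle below).

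Next I would invoke the openness of admissibility (Theorem \ref{admissisopen}): since $\Adm(\A)$ is Zariski open in $\mathbb{T}(\A)$ and contains $1 = t_0$, the set of $s\in[0,1]$ with $t_s$ non-admissible is finite; likewise $t_s \notin \mathcal{V}_1(\A)$ for all but finitely many $s$, because $\mathcal{V}_1(\A)$ is a proper Zariski closed subset that meets the algebraic curve $\{t_s\}$ in finitely many points (or all of it, in which case the analysis below applies verbatim to $\mathcal{V}_1^{l'}$ for the appropriate $l'$). For such a generic $s$, $t_s$ is admissible, so by \eqref{isolike1admiss} we have $\dim H^1(M,\mathcal{L}(s\alpha)) = \dim H^1(A^\bullet, (s\alpha)\wedge) = d$. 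Combining this with the very definition of $\mathcal{V}_1^k(\A)$, we conclude that $t_s \in \mathcal{V}_1^d(\A)$ but $t_s \notin \mathcal{V}_1^{d+1}(\A)$ for all but finitely many $s$.

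Finally I would compare $d$ with $l$. Since the Aomoto differential has constant rank along the punctured path, and since $\dim H^1(M, \mathcal{L}_t) \ge l$ by definition of $l$, one uses again the admissibility of the generic $t_s$ together with upper semicontinuity of $s \mapsto \dim H^1(M, \mathcal{L}(s\alpha))$ on the algebraic family: the generic value $d$ of this function is at most its value at the special point $s=1$, which is $\ge l$; but also the Aomoto cohomology can only jump up at special parameters, so $d \le \dim H^1(A^\bullet, \alpha\wedge)$ and the value at $s=1$ of the local-system cohomology is $\le$ this same Aomoto dimension when $t$ itself is admissible — but $t$ need not be admissible here. The clean way out: $d$ is the \emph{generic} (hence minimal) value of $\dim H^1(M,\mathcal{L}(s\alpha))$, and $l$ is the value at $s=1$, so $d \le l$; if $d = l$ we are done because then $t_s \notin \mathcal{V}_1^{l+1} \supsetneq$ nothing is needed — rather $t_s \in \mathcal{V}_1^l$ generically, which does not give the claim. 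Thus the genuinely needed inequality is the strict one $d < l$, equivalently: the cohomology $\dim H^1(M,\mathcal{L}_t)$ strictly jumps at $s=1$. This holds because $t$ lies on \emph{two} components $X_1, X_2$ each of which already forces $t_s$ to lie in $\mathcal{V}_1^{k_i}(\A)$ for $s$ near the corresponding generic locus — more precisely, $X_i \subset \mathcal{V}_1^{k_i}(\A)$ and the generic point of $X_i$ has cohomology dimension exactly $k_i$, whereas at the intersection point $t$ the two contributions superimpose, giving $l \ge k_1 + k_2 > \max(k_1,k_2) \ge d$ once one knows $d \le \max(k_1, k_2)$ (the generic point $t_s$ of the path escapes into the stratum governed by at most one component).

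\medskip
\noindent\textbf{Main obstacle.} The crux — and the step I expect to require the most care — is establishing the strict jump $d < l$ at $s=1$, i.e. that the cohomology really drops when moving off the intersection $t \in X_1 \cap X_2$ into the generic stratum of the path. This needs the structure theory of $\mathcal{V}_1(\A)$: that distinct components contribute additively to $H^1$ at their common points (so $l \ge k_1 + k_2$), together with the fact that a generic point $t_s$ of the segment from $1$ to $t$ belongs to the closure of at most one of the strata $\mathcal{V}_1^{k_i}(\A)^\circ$, giving $d \le \max(k_1,k_2) < k_1 + k_2 \le l$. The additivity input is essentially \cite{libgober2000cohomology}/\cite{arapura1997geometry}, and identifying the stratum of the generic $t_s$ uses that the $X_i$ are subtori (or translated subtori) in general position along the line through $1$ and $t$.
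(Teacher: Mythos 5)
Your approach --- routing through the Aomoto complex and using admissibility of generic $t_s$ to relate $\dim H^1(M,\mathcal{L}(s\alpha))$ to the scale-invariant quantity $d=\dim H^1(A^\bullet,\alpha\wedge)$ --- is quite different from the paper's, and in a sense conflates this Lemma with the Theorem it is meant to support: the admissibility-plus-\cite{libgober2000cohomology} machinery is exactly what the paper deploys in proving Theorem~\ref{thm:intersection}, which takes the present Lemma as an input to control the generic behaviour along the path. The paper proves the Lemma itself by a short, purely characteristic-variety argument with no reference to admissibility or the Aomoto complex: assuming $t_s\in\mathcal{V}_1^l(\mathcal{A})$ for all $s$, the (Zariski closure of the) path is contained in a single irreducible component $X_3$ of $\mathcal{V}_1^l(\mathcal{A})$; since $X_3\ne X_1$ (else $k_1\ge l\ge k_1+k_2$ would force $k_2\le 0$), the double membership $t\in X_1\cap X_3$ combined with the additivity result \cite[Prop.~6.9]{bartolo2010characteristic} puts $t$ in $\mathcal{V}_1^{l+k_1}(\mathcal{A})$, contradicting maximality of $l$.

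The genuine gap in your proposal is exactly the step you flag as ``the crux'': the strict inequality $d<l$. Your justification that $d\le\max(k_1,k_2)$ because ``the generic $t_s$ escapes into the stratum governed by at most one component'' is not an argument --- nothing prevents the generic point of the path from landing on some third component of $\mathcal{V}_1(\mathcal{A})$ of large depth, and a priori the scale-invariant Aomoto dimension $d$ has no reason to be bounded by $\max(k_1,k_2)$. The resolution is precisely the component-level observation above: if the entire path lay in $\mathcal{V}_1^l(\mathcal{A})$, a single irreducible component $X_3$ would contain it (because the Zariski closure of the path is irreducible), and then the additivity of \cite[Prop.~6.9]{bartolo2010characteristic} at the intersection point $t\in X_1\cap X_3$ overshoots $l$. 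Without that structural input, $d<l$ does not follow and your argument does not close.
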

\begin{proof} Notice that $l\ge k_1+k_2$ by 
\cite[Prop. 6.9.]{bartolo2010characteristic}. Suppose by contradiction that $t_s\in\mathcal{V}_1^l(\mathcal{A})$ for all $s\in[0,1]$. Then there exists $X_3\subset\mathcal{V}_1^l(\mathcal{A})$ a irreducible component of $\mathcal{V}_1^l(\mathcal{A})$ such that $t_s\in X_3$ for all $s\in[0,1]$. This implies that $t\in X_1\cap X_3$, but then by \cite[Prop. 6.9]{bartolo2010characteristic}, $t\in \mathcal{V}_1^{l+k_1}(\mathcal{A})\subset\mathcal{V}_1(\mathcal{A})$ contradicting the definition of $l$.  
\end{proof}
%Notice that $\mathcal{V}_1(\mathcal{A})$ in general has other types of components. However, the previous Theorems are false in general for these components, i.e. there are components that contain both admissible and non-admissible local systems. In fact we have the following
\begin{Theorem} 
\label{thm:intersection}
Let $X_1,X_2\subset\mathcal{V}_1(\mathcal{A})$ be two components of $\mathcal{V}_1(\mathcal{A})$. Suppose that $t\in X_1\cap X_2$ with $t\ne 1$. 
Then $\LL_t$ is a non-admissible local system.
\end{Theorem}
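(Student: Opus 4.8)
The plan is to combine the dimension-inequality for characteristic varieties (used already in Lemma~\ref{pathofintersectinchar}) with the same path-degeneration argument that proved Theorem~\ref{translnonadmiss}. Write $t=\Exp(\alpha)$ for some $\alpha\in H^1(M,\C)$, and consider the path $t_s=\Exp(s\alpha)$ with $s\in[0,1]$, so $t_0=1$ and $t_1=t$. Suppose for contradiction that $\LL_t$ is admissible, so $H^1(M,\LL_t)\cong H^1(A^\bullet,\alpha\wedge)$ where $A^\bullet=H^\bullet(M,\C)$. Since $t\in X_1\cap X_2\subset\mathcal{V}_1(\mathcal{A})$ and $t\ne1$, we have $\dim H^1(M,\LL_t)\ge 2$ (using the superadditivity $l\ge k_1+k_2\ge 1+1$ from \cite[Prop.~6.9]{bartolo2010characteristic} exactly as in Lemma~\ref{pathofintersectinchar}), hence $\dim H^1(A^\bullet,\alpha\wedge)\ge 2$.

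Next I would run the degeneration: by Theorem~\ref{admissisopen} admissibility is an open condition, so $t_s$ is admissible for all sufficiently small $s>0$; and by Lemma~\ref{pathofintersectinchar} (applied with $X_1,X_2$ and this $t$) we have $t_s\notin\mathcal{V}_1^2(\mathcal{A})$ for all but finitely many $s$, so in particular $\dim H^1(M,\LL_{t_s})\le 1$ for small $s>0$. Fix such an $s_1$ with $0<s_1\ll1$: then $t_{s_1}$ is admissible, so $H^1(M,\LL_{t_{s_1}})\cong H^1(A^\bullet,(s_1\alpha)\wedge)$, whence $\dim H^1(A^\bullet,(s_1\alpha)\wedge)\le 1$. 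Finally, by \cite{libgober2000cohomology} Lemma~4.1 the rescaling $\alpha\mapsto s_1\alpha$ does not change the Aomoto cohomology, i.e. $H^1(A^\bullet,(s_1\alpha)\wedge)\cong H^1(A^\bullet,\alpha\wedge)$, so $\dim H^1(A^\bullet,\alpha\wedge)\le1$, contradicting the lower bound $\ge2$ obtained above. Hence $\LL_t$ is non-admissible.

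The main point to get right, and the step I expect to be the real obstacle, is justifying the strict inequality $\dim H^1(M,\LL_t)\ge 2$: one must check that $t$ lying in the intersection of two \emph{distinct} components $X_1,X_2$ forces $t\in\mathcal{V}_1^2(\mathcal{A})$, which is where the superadditivity of the depths $k_i$ from \cite[Prop.~6.9]{bartolo2010characteristic} enters (each $k_i\ge1$ since $X_i\subset\mathcal{V}_1(\mathcal{A})=\mathcal{V}_1^1(\mathcal{A})$). Everything else is a direct transcription of the argument in Theorem~\ref{translnonadmiss}, using the openness of admissibility (Theorem~\ref{admissisopen}), the inequality $\dim H^1(M,\mathcal{L}(\beta))\ge\dim H^1(A^\bullet,\beta\wedge)$ of \cite{libgober2000cohomology} Prop.~4.2, and the scaling invariance of \cite{libgober2000cohomology} Lemma~4.1. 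One should also note, as in the remark after Theorem~\ref{translnonadmiss}, that the argument in fact shows $\LL_t$ is not even $1$-admissible.
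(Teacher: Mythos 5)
Your strategy is the same path-degeneration argument the paper itself uses, and most of it is correct, but there is a genuine gap in the step where you invoke Lemma~\ref{pathofintersectinchar}. That lemma concludes $t_s\notin\mathcal{V}_1^{l}(\mathcal{A})$ for almost all $s$, where $l:=\max\{k\mid t\in\mathcal{V}_1^{k}(\mathcal{A})\}=\dim H^1(M,\LL_t)$ is the exact depth of $t$. You instead assert $t_s\notin\mathcal{V}_1^{2}(\mathcal{A})$, i.e.\ $\dim H^1(M,\LL_{t_s})\le1$. Since the sets $\mathcal{V}_1^{k}$ form a \emph{descending} chain, $t_s\notin\mathcal{V}_1^{l}$ does not imply $t_s\notin\mathcal{V}_1^{2}$ whenever $l>2$, and $l>2$ certainly can happen (e.g.\ if one of the $X_i$ already lies in $\mathcal{V}_1^{2}$, or if $t$ belongs to three or more components, by the same superadditivity from \cite[Prop.\ 6.9]{bartolo2010characteristic}). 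In that case your chain of inequalities only yields $\dim H^1(A^\bullet,\alpha\wedge)\le l-1$, and paired with your lower bound $\dim H^1(A^\bullet,\alpha\wedge)\ge2$ this gives no contradiction when $l\ge3$. You flagged the lower bound $\ge2$ as the delicate point, but that part is fine; the problem is that you then throw away the sharp value $l$ and replace it with $2$ on both sides.

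The repair is exactly what the paper does: carry the exact depth $l$ through the whole argument. Admissibility gives the equality $\dim H^1(A^\bullet,\alpha\wedge)=\dim H^1(M,\LL_t)=l$, not merely $\ge 2$. Lemma~\ref{pathofintersectinchar} (together with openness of admissibility, Theorem~\ref{admissisopen}) then produces a small $s_1>0$ with $t_{s_1}$ admissible and $t_{s_1}\notin\mathcal{V}_1^{l}(\mathcal{A})$, so $\dim H^1(M,\LL_{t_{s_1}})\le l-1$. The Libgober--Yuzvinsky inequality and the scaling invariance of Aomoto cohomology then give
$$l=\dim H^1(A^\bullet,\alpha\wedge)=\dim H^1(A^\bullet,(s_1\alpha)\wedge)\le\dim H^1(M,\LL_{t_{s_1}})\le l-1,$$
a contradiction. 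Your remark that the argument also rules out $1$-admissibility is correct once this fix is made, and the rest of the proposal (superadditivity input, openness, rescaling) matches the paper's proof.
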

\begin{proof} %Let $\lambda\in X_1\cap\dots\cap X_r\subset\mathcal{V}_1(\mathcal{A})$ be such a local system and 
Let us suppose that $t$ is admissible. Let $l\geq0$ be such that $\dim H^1(M,\mathcal{L}_t)=l$, i.e. $l$ is the maximum integer such that $t\in\mathcal{V}_1^l(\mathcal{A})$. 
By \cite[Prop.6.9.]{bartolo2010characteristic}, $l\geq 2$.  
By admissibility, there exists $\alpha\in H^1(M,\C)$ such that $\Exp(\alpha)=t$ and $H^1(M,\LL_t)=H^1(M,\mathcal{L}(\alpha))\cong H^1(A^\bullet,\alpha\wedge)$, which is $l$-dimensional. 
%Notice that since $t\in\mathcal{V}_1^2(\mathcal{A})$, then $\dim H^1(A^\bullet,\alpha\wedge)\ge 2$.

Let us consider now $t_s=\exp(s \alpha)$ for $s\in[0,1]$, then $t_0=1$ and $t_1=t$. Moreover by Lemma \ref{pathofintersectinchar}, $t_s\notin\mathcal{V}_1^l(\mathcal{A})$ for all except finitely many $s$ and since admissibility is an open property, $t_s$ is admissible if $0<t\ll 1$.

Let us now fix $0<s_1\ll 1$ such that $t_{s_1}\notin\mathcal{V}_1^l(\mathcal{A})$. This implies that $\dim H^1(M,\C_{t_{s_1}})<l$ and $t_{s_1}$ is admissible. Hence
$$l>\dim H^1(M,\LL_{t_{s_1}})=\dim H^1(M,\mathcal{L}(t_1\alpha)).$$ 

By \cite{libgober2000cohomology} Corollary 4.3, we know that  $l>\dim H^1(M,\mathcal{L}(s_1\alpha))\ge\dim H^1(A^\bullet,(s_1\alpha)\wedge)$. Now by \cite{libgober2000cohomology} Lemma 4.1, we have that $H^1(A^\bullet,(s_1\alpha)\wedge)\cong H^1(A^\bullet,(\alpha)\wedge)$. But this implies that $l>\dim H^1(A^\bullet,\alpha\wedge)$ and this is impossible.
\end{proof}
Notice that the previous proof also answer a question of Dimca from \cite{dimca2009admissible} about the 1-admissibility of local systems at the intersection of non-local components of $\mathcal{V}_1(\mathcal{A})$.

\section{Examples}
\label{sec:examples}

\begin{Example}(Non-Fano plane) The arrangement $\mathcal{A}$ is the realisation of the celebrated non-Fano matroid. It has defining polynomial $Q=xyz(x-y)(x-z)(y-z)(x+y-z)$. A decone of $\mathcal{A}$ is depicted in the Figure below (the seventh line is at infinity). Its characteristic varieties were computed in \cite{cohen1999characteristic}. The variety $\mathcal{V}_1(\mathcal{A})\subset (\C^*)^7$ has $6$ local components, corresponding to triple points, and $3$ non-local components, $\Pi_1=\Pi(25|36|47), \Pi_2=\Pi(17|26|35), \Pi_3=\Pi(14|23|56)$, corresponding to braid sub-arrangements. The local components meet only at $1$, but the non-local components also meet at the point $\rho=(1,-1,-1,1,-1,-1,1)$. 
Hence by Theorem \ref{thm:intersection}, $\rho\in\NonAdm(\A)$. 

We can also prove the non-admissibility of $\rho$ by using Theorem 
\ref{thm:positivity}. Indeed, in this case, we have 
$\Phi=
\{a_1, \dots, a_7, a_{125}, a_{136}, a_{237}, a_{246}, a_{345}, a_{567}\}$ 
and 
$$
\Phi_\rho=
\{a_1, a_4, a_7, a_{125}, a_{136}, a_{237}, a_{246}, a_{345}, a_{567}\}, 
$$
where $a_{ijk}$ denotes $a_i+a_j+a_k$. 
Obviously, 
$$
a_1+a_4+a_7+a_{125}+a_{136}+a_{237}+a_{246}+a_{345}+a_{567}=
3a_{1234567}=0. 
$$
Hence the assumption of Theorem \ref{thm:positivity} is satisfied. 
Then by the direct computation, we can show 
$$
\T(\Phi_\rho)=\{1, \rho\}. 
$$
Thus $\rho$ is non-admissible. 

Furthermore, after long case-by-case computation, we can show 
$$
\NonAdm(\A)=\{\rho\}. 
$$

%Notice that a direct computation shows that $\rho$ is a non-admissible local system. Let us suppose that $\rho$ is admissible. Then it exists $\alpha\in H^1(M,\C)$ with $\exp(\alpha)=\rho$ such that for all $p\in P:=\{p\in\PPP^2~|~\rho(p)=1\}$, by Remark \ref{simpleriformulation}, we have $\alpha(p)\le0$ and $a_i\le0$. This implies that  $a_1=a_4=a_7=0$. Moreover, $a_2+a_5\le 0$ and $a_1+a_2+\dots+a_7=0$ imply that $a_3+a_6\ge0$ but we have $a_3+a_6\le0$, thus $a_3+a_6=0$ and $a_2+a_5=0$. Similarly, one can show that $a_2+a_6=0$ and $a_3+a_5=0$. We have now that $a_6=a_5$ and $a_2=a_3$. But we have also that $a_2+a_3=0$ and $a_5+a_6=0$, which imply that $a_2=a_3=a_5=a_6=0$, but this is a contradiction because $\rho\ne0$. So $\rho$ is a non-admissible local system.

%%%%%%
%%%%%%  Skip figure
%%%%%%
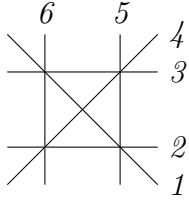
\begin{figure}[h]\label{nonfanofig}
\begin{center}
\begin{tikzpicture}
  \draw (-0.5,0) -- (1.5,0) node[anchor=west] {2};
  \draw (0,-0.5) -- (0,1.5) node[anchor=south] {6};
  \draw (-0.5,1) -- (1.5,1) node[anchor=west] {3};
  \draw (1,-0.5) -- (1,1.5) node[anchor=south] {5};
  \draw (-0.5,1.5) -- (1.5,-0.5) node[anchor=west] {1};
  \draw (-0.5,-0.5) -- (1.5,1.5) node[anchor=west] {4};
\end{tikzpicture}
\end{center}
\caption{The non-Fano arrangement}
\end{figure}
\end{Example}

Even though $\NonAdm(\A)\subset\T(\A)$ is combinatorially determined, 
it is in general very hard to give the precise description of 
$\NonAdm(\A)$. However, if we restrict our consideration to the essential 
open subset $\Tess(\A)$, then the situation becomes simple. 

\begin{Definition} Define
$$\Tess(\A):=\{(t_0, \dots, t_n)\in\T(\A)\mid t_i\neq 1, \forall 1\leq i\leq n\}.$$
\end{Definition}
We also denote $\Admess(\A)=\Adm(\A)\cap\Tess$ and 
$\NonAdmess(\A)=\NonAdm(\A)\cap\Tess$. There are several motivations 
to consider $\Tess(\A)$. First, D. Cohen \cite{coh-tri} proves that, 
under certain assumption, if $t_i=1$ for some $i$, then 
$H^*(M(\A), \LL_t)\simeq H^*(M(\A'), \LL_{t'})$, where 
$\A'=\A\setminus\{H_i\}$ and $t'=(t_1, \dots, t_{i-1}, t_{i+1}, \dots, t_n)$. 
In other words, the cohomology of non-essential local system 
can be, sometimes, computed by using smaller arrangement $\A'$. 
Secondly, the local system associated to non-trivial monodromies of 
the Milnor fiber is essential one \cite{Suciu:2013fk}. 

\begin{Example}(Deleted $B_3$ arrangement) Let $\mathcal{A}$ be the arrangement defined by $Q=xyz(x-y)(x-z)(y-z)(x-y-z)(x-y+z)$. It is obtained by deleting the plane $x+y-z=0$ from the arrangement $B_3$. The decone of $\mathcal{A}$, obtained by setting $z=1$, is depicted in the Figure below (the eighth line is at infinity). 

In this case, $\Phi=\{a_1, \dots, a_8, 
a_{136}, a_{147}, a_{128}, a_{235}, a_{246}, a_{348}, a_{5678}\}$. 
If $t\in\Tess(\A)$, then by definition, $\Phi_t\subset
\{a_{136}, a_{147}, a_{128}, a_{235}, a_{246}, a_{348}, a_{5678}\}$. 
These seven linear forms have a linear relation 
$$
a_{136}+
2a_{147}+
a_{128}+
2a_{235}+
a_{246}+
a_{348}+
2a_{5678}=
4a_{12345678}=0. 
$$
We can also prove that arbitrary six among the above seven linear forms are 
linearly independent. Thus if $\Phi_t\subsetneq 
\{a_{136}, a_{147}, a_{128}, a_{235}, a_{246}, a_{348}, a_{5678}\}$, 
by Proposition \ref{prop:indep}, $t$ is admissible. 
Hence we consider the case $\Phi_t=
\{a_{136}, a_{147}, a_{128}, a_{235}, a_{246}, a_{348}, a_{5678}\}$. 
Then $\T(\Phi_t)$ is a union of two $1$-dimensional tori $T_1$ and 
$T_2$, where 
\begin{eqnarray*}
T_1&=&\{(t,  t^{-1},  t^{-1}, t, t^2,  1, t^{-2},  1)\mid t\in\C^*\}, \\
T_2&=&\{(t, -t^{-1}, -t^{-1}, t, t^2, -1, t^{-2}, -1)\mid t\in\C^*\}. 
\end{eqnarray*}
Since $T_1$ is not essential, we have 
$\NonAdmess(\A)=T_2$. We note that $T_2$ is exactly equal to 
the translated component computed in 
\cite{suciu2002translated}. 

%The variety $\mathcal{V}_1(\mathcal{A})\subset (\C^*)^8$ has $7$ local components, corresponding to $6$ triple points and one quadruple point, $5$ non-local components, $\Pi_1=\Pi(15|26|38), \Pi_2=\Pi(28|36|45), \Pi_3=\Pi(14|23|68), \Pi_4=\Pi(16|27|48), \Pi_5=\Pi(18|37|46)$, corresponding to braid sub-arrangements, and one essential component $C$, which does not pass through $1$. This last component is one dimensional and it is parametrized by $$C=\{(t,-t^{-1},-t^{-1},t,t^2,-1,t^{-2},-1)~|~t\in\C^*\}.$$ The local components meet only at $1$, but the non-local components $\Pi_1, \Pi_2$ and $\Pi_3$ also meet at $\rho_1=(1,-1,-1,1,1,-1,1,-1)$ and $\Pi_3, \Pi_4$ and $\Pi_5$ also meet at $\rho_2=(-1,1,1,-1,1,-1,1,-1)$. Notice that $\rho_1,\rho_2\in C$ and so by Theorem \ref{translnonadmiss}, $\rho_1$ and $\rho_2$ are non-admissible local system.

%%%%%%
%%%%%%  Skip figure
%%%%%%
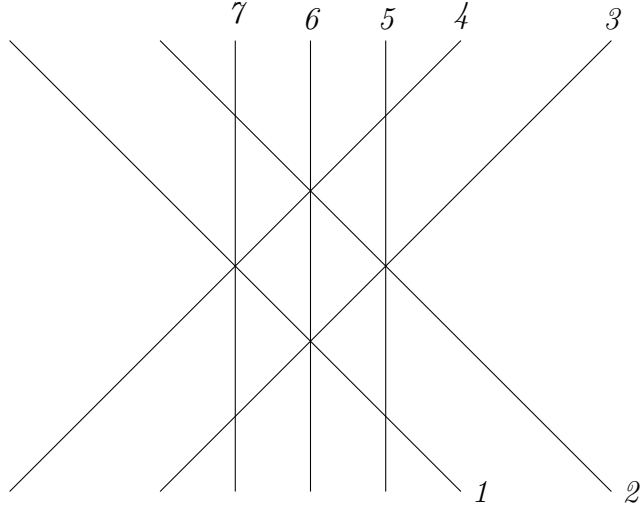
\begin{figure}[h]
\begin{center}
\begin{tikzpicture}
  \draw (0,-3) -- (0,3) node[anchor=south] {6};
  \draw (1,-3) -- (1,3) node[anchor=south] {5};
  \draw (-1,-3) -- (-1,3) node[anchor=south] {7};
  \draw (-2,3) -- (4,-3) node[anchor=west] {2};
  \draw (-4,3) -- (2,-3) node[anchor=west] {1};
\draw (-4,-3) -- (2,3) node[anchor=south] {4};
  \draw (-2,-3) -- (4,3) node[anchor=south] {3};
\end{tikzpicture}
\end{center}
\caption{The deleted $B_3$ arrangement}
\end{figure}
\end{Example}

%In the previous Example, the characteristic variety $\mathcal{V}_1(\mathcal{A})$ contain a translated component that gives us non-admissible local systems in the non-local components. However the presence of a translated component is not necessarily for that to happen. In fact we have the following

\begin{Example}(Non-Pappus arrangement) Let $\mathcal{A}$ be the arrangement defined by $Q=xyz(x+y)(y+z)(x+3z)(x+2y+z)(x+2y+3z)(2x+3y+3z)$. A decone of $\mathcal{A}$ is depicted in the Figure below (the $9$-th line is at infinity). 
Let $t\in\Tess(\A)$. Then 
$$
\Phi_t\subset
\{a_{129}, a_{146}, a_{158}, a_{238}, a_{247}, 
a_{367}, a_{345}, a_{569}, a_{789}\}. 
$$
We can show that arbitrary eight linear forms among 
the above nine are linearly independent. Thus, if 
$|\Phi_t|<9$, then $t$ is admissible by Proposition \ref{prop:indep}. 
Hence we consider the case 
$$\Phi_t=\{a_{129}, a_{146}, a_{158}, a_{238}, a_{247}, a_{367}, a_{345}, a_{569}, a_{789}\}.$$
Then 
$$
\T(\Phi_t)=\{1, \rho, \rho^2, \rho^3, \dots, \rho^8\}, 
$$
where 
$\rho=(\zeta, \zeta, \zeta^4, \zeta, \zeta^4, \zeta^7, \zeta^7, \zeta^4, \zeta^7)$ with $\zeta=e^{2\pi\sqrt{-1}/9}$. 
The positive linear relation 
$$
a_{129}+a_{146}+a_{158}+a_{238}+a_{247}+
a_{367}+a_{345}+a_{569}+a_{789}=3a_{123456789}=0
$$
enable us to use Theorem \ref{thm:positivity}, and we can conclude 
$$
\NonAdmess(\A)=
\{\rho, \rho^2, \rho^3, \dots, \rho^8\}. 
$$

%%%%%%
%%%%%%  Skip figure
%%%%%%
\begin{figure}[h]\label{nonpappusfig}
\begin{center}
\begin{tikzpicture}
  \draw (-3,3.5) -- (-3,-3.5) node[anchor=north] {1};
  \draw (0,3.5) -- (0,-3.5) node[anchor=north] {2};
  \draw (-4,0) -- (3.5,0) node[anchor=west] {8};
  \draw (-4,-1) -- (3.5,-1) node[anchor=west] {7};
  \draw (-3.5,3.5) -- (3.5,-3.5) node[anchor=north] {3};
  \draw (-4,1.5) -- (3.5,-2.25) node[anchor=west] {6};
  \draw (-4,0.5) -- (3.5,-3.25) node[anchor=south] {5};
  \draw (-4,5/3) -- (3.5,-10/3) node[anchor=west] {4};
\end{tikzpicture}
\end{center}
\caption{The non-Pappus arrangement}
\end{figure}
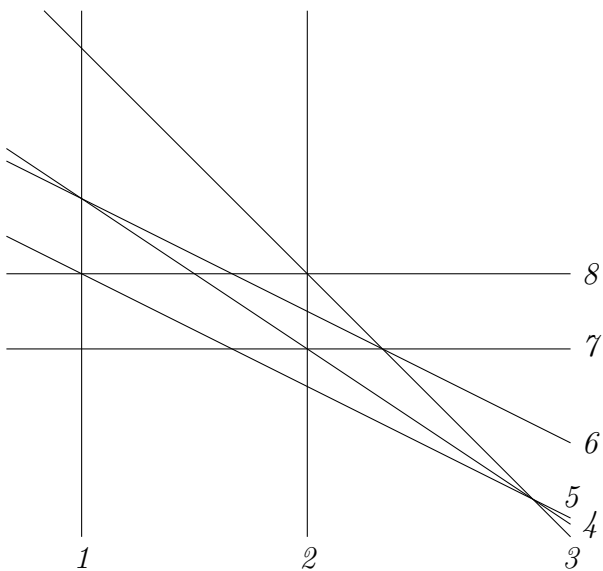
\end{Example}

\newpage

\bibliography{bibliothesis}{}
\bibliographystyle{plain}

\end{document}